\definecolor{Black}{cmyk}{0,0,0,1}
\definecolor{OrangeRed}{cmyk}{0,0.6,1,0} 
\definecolor{DarkBlue}{cmyk}{1,1,0,0.20}
\definecolor{myblue}{rgb}{0.66,0.78,1.00}
\definecolor{Violet}{cmyk}{0.79,0.88,0,0}
\definecolor{Lavender}{cmyk}{0,0.48,0,0}
\newtheorem{theorem}{Theorem}[section]
\newtheorem{lemma}[theorem]{Lemma}
\theoremstyle{definition}
\newtheorem{conjecture}[theorem]{Conjecture}
\newcommand{\bea}{\begin{eqnarray*}}
\newcommand{\eea}{\end{eqnarray*}}
\numberwithin{equation}{section}
\begin{document}

\title[ A global approximation result by Al Taylor and the strong openness conjecture in $\mathbb{C}^n$]{ 
 A global approximation result by Al Taylor and the strong openness conjecture in $\mathbb{C}^n$}
\author{John Erik Forn\ae ss}
\author{Jujie Wu}
\address{{John Erik Forn\ae ss}
\\{\it E-mail address:} johnefo@math.ntnu.no
\\{ Department of Mathematical Sciences, NTNU}\\{ Sentralbygg 2, Alfred Getz vei 1, 7034 Trondheim, Norway}}

\address{{Jujie Wu}
\\{\it E-mail address:} 99jujiewu@tongji.edu.cn
\\{ School of Mathematics and statistics, Henan University}\\{ Jinming Campus of Henan University, Jinming District, City of Kaifeng, Henan Provence. PR. China 475001}}

\date{}
\maketitle

\bigskip

\begin{abstract}
We improve a global approximation result by Al Taylor in $\mathbb C^n$ for holomorphic functions in weighted Hilbert spaces. The main tools are a variation of the theorem of H\"ormander  on weighted $L^2$-estimates for the $\overline{\partial}$-equation together with the solution of the strong openness conjecture. A counterexample to a global strong openness conjecture in $\mathbb{C}^n $ is also given here.
\bigskip

\noindent{{\sc Mathematics Subject Classification} (2010): 32A05, 32A15, 32A36.}

\smallskip

\noindent{{\sc Keywords}:} Weighted approximation; openness conjecture; $L^2$-estimate of $\overline{\partial}$-equation.
\end{abstract}

\tableofcontents

\section{Introduction}
Let $z = (z_1, z_2, \cdots, z_n )\in \mathbb{C}^n$, $\|z\|^2 = |z_1|^2 +|z_2|^2 +\cdots + |z_n|^2$.  Let $\Omega \subset \mathbb{C}^n$ be a pseudoconvex domain.  If $\varphi $ is a measurable function in $\Omega$, we denote by $L^2(\Omega ,\varphi)$ the space of measurable functions $f$ in $\Omega $ which are square integrable with respect to the measure $e^{-\varphi}d\lambda$, i.e.,
$$
\|f\|_\varphi^2 : = \int_{\Omega } |f|^2 e^{-\varphi}d\lambda < + \infty,
$$
where $d\lambda$ is the Lebesgue measure. This is a subspace of the space $L^2(\Omega, \rm{loc})$ of functions in $\Omega$ which are locally square integrable with respect to the Lebesgue measure. By $L^2_{p,q} (\Omega , \varphi) $ we denote the space of differential forms of type $(p,q)$ with coefficients in $L^2 (\Omega, \varphi)$. By $H(\Omega,\varphi)$ we denote the set of holomorphic functions on $\Omega$ which belong to $L^2 (\Omega, \varphi)$.
For a weight $\varphi, $ let $H(\varphi)$ denote the space of entire functions $f$ with finite $\varphi$ norm, i.e., $\|f\|^2_{H(\varphi)}=\int_{\mathbb C^n}|f|^2e^{-\varphi}d\lambda \le + \infty.$

In 1971 Al Taylor [6] investigated weighted approximation results for entire functions in $\mathbb C^n$. He proved:

\begin{theorem}
Let $\varphi_1\le \varphi_2\le \varphi_3 \le \cdots$ be plurisubharmonic (psh) functions on $\mathbb C^n,$ let
$\varphi=\lim\limits_{j\rightarrow \infty} \varphi_j$, and suppose that
$\int_Ke^{-\varphi_1}d\lambda<\infty$ for every compact set $K.$ Then the closure of $\bigcup \limits _{j=1}^\infty H(\varphi_j+\log(1+\|z\|^2))$  in the Hilbert space $L^2( \varphi + \log(1+\|z\|^2))$ contains $H(\varphi)$.
\end{theorem}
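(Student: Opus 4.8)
The plan is to fix $f\in H(\varphi)$ and show that for every $\varepsilon>0$ there is an index $j$ and a function $g\in H\big(\varphi_j+\log(1+\|z\|^2)\big)$ with $\|f-g\|_{\varphi+\log(1+\|z\|^2)}<\varepsilon$. Since $\varphi_j\le\varphi$ forces $e^{-\varphi_j-\log(1+\|z\|^2)}\ge e^{-\varphi-\log(1+\|z\|^2)}$, so that $L^2(\varphi_j+\log(1+\|z\|^2))\subset L^2(\varphi+\log(1+\|z\|^2))$, this exhibits $f$ as an $L^2(\varphi+\log(1+\|z\|^2))$-limit of elements of $\bigcup_j H(\varphi_j+\log(1+\|z\|^2))$, which is what we want. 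Throughout, write $\psi:=\log(1+\|z\|^2)$; note $\psi\ge0$, so $\|f\|_{\varphi+\psi}\le\|f\|_\varphi<\infty$, i.e. $f\in H(\varphi+\psi)$, and the tails $\int_{\|z\|\ge R}|f|^2e^{-\varphi}\,d\lambda$ tend to $0$ as $R\to\infty$.

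The main device is a cutoff followed by a $\bar\partial$-correction, with $\psi$ supplying the strict plurisubharmonicity Hörmander's estimate requires. For $R\ge1$ take $\chi_R\in C^\infty_c(\mathbb C^n)$ with $\chi_R\equiv1$ on $\{\|z\|\le R\}$, $\operatorname{supp}\chi_R\subset\{\|z\|\le2R\}$ and $\|\bar\partial\chi_R\|\le C/R$, so that $\bar\partial(\chi_Rf)=f\,\bar\partial\chi_R$ is a $\bar\partial$-closed $(0,1)$-form supported in the annulus $A_R:=\{R\le\|z\|\le2R\}$. A direct computation gives that $i\partial\bar\partial\psi$ is positive with smallest eigenvalue $(1+\|z\|^2)^{-2}$, hence for any $(0,1)$-form $w$ one has $|w|^2_{i\partial\bar\partial\psi}\le(1+\|z\|^2)^2\|w\|^2$. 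Fixing $j$ and applying the standard version of Hörmander's $L^2$-estimate on the pseudoconvex $\mathbb C^n$ with the plurisubharmonic weight $\varphi_j+\psi$ (using $i\partial\bar\partial(\varphi_j+\psi)\ge i\partial\bar\partial\psi>0$), we obtain $u=u_{R,j}$ with $\bar\partial u=f\,\bar\partial\chi_R$ and
$$\int_{\mathbb C^n}|u|^2e^{-\varphi_j-\psi}\,d\lambda\ \le\ \int_{\mathbb C^n}|f\,\bar\partial\chi_R|^2_{i\partial\bar\partial\psi}\,e^{-\varphi_j-\psi}\,d\lambda\ \le\ 25C^2\int_{A_R}|f|^2e^{-\varphi_j}\,d\lambda ,$$
the last bound using $|f\,\bar\partial\chi_R|^2_{i\partial\bar\partial\psi}\le(1+\|z\|^2)^2|f|^2C^2/R^2$, $e^{-\psi}=(1+\|z\|^2)^{-1}$ and $(1+\|z\|^2)/R^2\le5$ on $A_R$. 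The right-hand side is finite because $f$ is bounded on the compact set $A_R$ and $e^{-\varphi_j}\le e^{-\varphi_1}$ is integrable on compacts by hypothesis. Setting $g:=\chi_Rf-u$ we get $\bar\partial g=0$, and $g\in L^2(\varphi_j+\psi)$ since $\chi_Rf$ has compact support and $u\in L^2(\varphi_j+\psi)$; thus $g\in H(\varphi_j+\psi)$.

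It remains to choose $R$ and then $j$. Since $\varphi\ge\varphi_j$ we have $\|u\|_{\varphi+\psi}\le\|u\|_{\varphi_j+\psi}$, whence
$$\|f-g\|_{\varphi+\psi}\ \le\ \|(1-\chi_R)f\|_{\varphi+\psi}+\|u\|_{\varphi+\psi}\ \le\ \Big(\int_{\|z\|>R}|f|^2e^{-\varphi-\psi}\,d\lambda\Big)^{1/2}+5C\Big(\int_{A_R}|f|^2e^{-\varphi_j}\,d\lambda\Big)^{1/2}.$$
Given $\varepsilon>0$, first pick $R\ge1$ with $\int_{\|z\|\ge R}|f|^2e^{-\varphi}\,d\lambda<\varepsilon$; this dominates both $\int_{\|z\|>R}|f|^2e^{-\varphi-\psi}\,d\lambda$ and $\int_{A_R}|f|^2e^{-\varphi}\,d\lambda$. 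With $R$ now fixed, $e^{-\varphi_j}\downarrow e^{-\varphi}$ pointwise on $A_R$ and all these functions are dominated by the integrable $|f|^2e^{-\varphi_1}$ there, so by monotone convergence $\int_{A_R}|f|^2e^{-\varphi_j}\,d\lambda\to\int_{A_R}|f|^2e^{-\varphi}\,d\lambda<\varepsilon$; choose $j$ with $\int_{A_R}|f|^2e^{-\varphi_j}\,d\lambda<\varepsilon$. Then $\|f-g\|_{\varphi+\psi}<(1+5C)\sqrt\varepsilon$, and letting $\varepsilon\to0$ finishes the proof.

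The step I expect to be the real crux is the coupling of the two limits. One cannot simply send $j\to\infty$ directly, because $f$ need not belong to any $H(\varphi_j)$ (indeed $e^{-\varphi_j}\ge e^{-\varphi}$); instead one must freeze the scale $R$ first, so that the quantity $\int_{A_R}|f|^2e^{-\varphi_j}\,d\lambda$ that governs the size of the $\bar\partial$-correction is an integral over a \emph{fixed compact} annulus, on which the monotone convergence $e^{-\varphi_j}\downarrow e^{-\varphi}$ applies with an integrable majorant; only afterwards does one push $j$ up. The remaining ingredients — the eigenvalue bound for $i\partial\bar\partial\log(1+\|z\|^2)$ and the application of Hörmander's theorem with a possibly singular plurisubharmonic weight (handled by the usual smoothing of $\varphi_j$) — are standard.
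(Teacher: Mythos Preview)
Your argument is correct. Note, however, that the paper does not itself prove this statement: it is quoted as Taylor's 1971 result, and the paper's own contribution is the sharper Theorem~\ref{th:main2}. So the natural comparison is between your proof and the paper's proof of Theorem~\ref{th:main2}.

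The two arguments share the same skeleton (cut off, solve $\overline\partial$, pass to the limit) but differ in each of the three ingredients. You use a plain radial cutoff $\chi_R$ and the basic H\"ormander estimate (Theorem~\ref{th:Hormander'sestimate}) with $\Theta=i\partial\overline\partial\log(1+\|z\|^2)$; this costs a factor $(1+\|z\|^2)^2$ from the inverse Hessian, which you absorb using $e^{-\psi}=(1+\|z\|^2)^{-1}$ and the scale $R^{-2}$ of $|\overline\partial\chi_R|^2$. The paper instead takes a logarithmic cutoff $\chi\big(\log(-\epsilon\psi)+\log\tfrac1N\big)$ with $\psi=-\log\log(e+\|z\|^2)$ and invokes Berndtsson's twisted estimate (Theorem~\ref{th:Hormander1}) with $\Theta=i\epsilon^2\partial\psi\wedge\overline\partial\psi$, so that $|\overline\partial\chi|^2_\Theta$ collapses to $O(N^{-2})$ directly. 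For the local integrability on the annulus you use the standing hypothesis $\int_K e^{-\varphi_1}\,d\lambda<\infty$, which gives an integrable majorant for $|f|^2 e^{-\varphi_j}$ on the compact $A_R$; in Theorem~\ref{th:main2} that hypothesis has been removed, and the paper must invoke the strong openness theorem (Theorem~\ref{th:opennessconjecture}) plus a compactness argument to manufacture a $j_0$ with $\int_K|f|^2e^{-\widetilde\varphi_{j_0}}\,d\lambda<\infty$.

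In short, your more elementary route works precisely because Taylor's original theorem carries the extra hypothesis $\int_K e^{-\varphi_1}\,d\lambda<\infty$; the heavier machinery in the paper is what is needed to dispense with it and obtain the improvement. The ``crux'' you flag---freeze $R$ first, then send $j\to\infty$ on a fixed compact---is exactly the order of limits the paper uses as well.
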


The condition on the sequence of plurisubharmonic functions resemble the condition in the recent work on the strong openness conjecture.
Berndtsson proved the openness conjecture of Demailly-Koll\'ar as follows.

\begin{theorem}[Openness theorem]
Let $U$ be a bounded pseudoconvex domain and $\varphi \in psh^-(U)$ ($psh^-(U)$ means the set of negative psh function on $U$) with
$$
\int_U e^{-\varphi}d\lambda <+\infty.
$$
Let $V$ be a relatively compact domain in $U$. Then there exists $p>1$ such that
$$
\int_V e^{-p\varphi} d\lambda < +\infty.
$$
\end{theorem}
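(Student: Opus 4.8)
\noindent\emph{Sketch of a proof.} First reduce to a local statement. Since $\overline V$ is compact in $U$ and $\int_U e^{-\varphi}\,d\lambda<\infty$, each $x\in\overline V$ lies in a ball $B(x,r_x)\Subset U$ on which $e^{-\varphi}$ is integrable, so it suffices to show that for every such $x$ there are a smaller ball $B_x\ni x$ and an exponent $p_x>1$ with $\int_{B_x}e^{-p_x\varphi}\,d\lambda<\infty$: covering $\overline V$ by finitely many $B_x$ and setting $p=\min_x p_x$ then gives the theorem. From now on the problem is local: $\varphi$ is psh near a closed ball, $e^{-\varphi}$ is integrable there, and we want $e^{-p\varphi}\in L^1$ on a slightly smaller ball for some $p>1$.

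\emph{Base case $n=1$.} Here $\varphi$ is subharmonic near $x\in\C$, and by the classical integrability theorem of Skoda — whose two bounds $\nu<2$ (sufficient) and $\nu\ge 2n$ (obstructing) both read $\nu<2$ resp. $\nu\ge 2$ in complex dimension one — the condition $e^{-\varphi}\in L^1$ near $x$ is \emph{equivalent} to $\nu(\varphi,x)<2$, where $\nu(\varphi,x)$ denotes the Lelong number (the atom of $\tfrac{1}{2\pi}\Delta\varphi$ at $x$); one sees this directly from the local Riesz decomposition $\varphi=\nu(\varphi,x)\log|z-x|+v$ with $v$ subharmonic of Lelong number $0$ at $x$. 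Since $\nu(p\varphi,x)=p\,\nu(\varphi,x)$, the strict inequality persists for $p$ slightly above $1$ (for all $p$ when $\nu(\varphi,x)=0$), and Skoda again gives $e^{-p\varphi}\in L^1$ near $x$ — the base of an induction on $n$.

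\emph{Inductive step.} The tempting plan is: restrict $\varphi$ to a generic complex hyperplane (or line) through $x$, apply the lower-dimensional case, and transport the resulting integrability back to the ambient ball by a weighted $L^2$-estimate for $\overline{\partial}$ — in sharpest form the Ohsawa--Takegoshi extension theorem, which moves an $L^2(e^{-p\varphi})$-bound from a hyperplane slice to a neighbourhood in $\C^n$ with a universal constant. The real obstruction, which is the whole content of the conjecture, is that this naive slicing fails: Fubini yields $e^{-\varphi|_H}\in L^1$ near $x$ only for \emph{almost every} hyperplane parallel to a fixed one, never for the slices actually through the (possibly exceptional) point $x$; equivalently, the restriction of $\varphi$ to a generic line through $x$ has Lelong number $\nu(\varphi,x)$, which the hypothesis only forces to be $<2n$, not $<2$. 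Overcoming this is the crux, and I expect it to be the main obstacle.

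\emph{Resolving the obstacle.} Two routes are available. Following Berndtsson, one studies the on-diagonal Bergman kernel $K_p(z)$ attached to the holomorphic functions that are square-integrable for the weight $p\varphi$ on a ball, and exploits the plurisubharmonic variation of $\log K_p$ in an auxiliary deformation parameter; this produces, as $p$ decreases to $1$, holomorphic functions with uniformly controlled $L^2(e^{-p\varphi})$-norm, which is precisely what forces the log-canonical threshold of $\varphi$ at $x$ to be \emph{unattained} — and an unattained threshold must exceed $1$, since $e^{-\varphi}\in L^1$ already places it at $\ge 1$. Following Guan--Zhou instead, one uses the Ohsawa--Takegoshi theorem with its \emph{optimal}, dimension-independent constant to run an honest induction on the dimension, combined with a monotone-convergence argument as $p\downarrow 1$. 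In either route the localisation, the $n=1$ base case, and the formal setup of the induction are routine; all the weight of the proof lies in securing the estimates uniform in $p$ near $1$.
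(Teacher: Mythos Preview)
The paper does not prove this statement. Theorem~1.2 is quoted as Berndtsson's solution of the Demailly--Koll\'ar openness conjecture (with reference to \cite{Boberndtsson2013}) and is used only as background; the paper's own work begins with the strong openness theorem of Guan--Zhou (Theorem~\ref{th:opennessconjecture}) and applies it in the proof of Theorem~\ref{th:main2}. So there is no in-paper proof to compare your proposal against.

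As an outline of the literature your sketch is accurate: the local reduction and the $n=1$ case via Skoda are routine, and you have correctly identified the real obstruction, namely that neither Fubini nor a generic-slice argument controls the slice through the actual point $x$. Your description of Berndtsson's route (plurisubharmonic variation of the diagonal Bergman kernel in a deformation parameter, yielding uniform $L^2(e^{-p\varphi})$ control as $p\downarrow 1$) matches the strategy in \cite{Boberndtsson2013}, and the Guan--Zhou route you mention is really their proof of the \emph{strong} openness conjecture, which of course implies the openness theorem as a special case ($F\equiv 1$).

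That said, as a \emph{proof} your proposal stops exactly where the difficulty begins. The paragraph ``Resolving the obstacle'' names the two approaches but does not execute either: you neither set up Berndtsson's deformation and extract the crucial monotonicity/psh-variation estimate, nor carry out the Ohsawa--Takegoshi-with-optimal-constant induction. Everything before that paragraph is standard; everything in it is assertion. If this is intended as a self-contained proof rather than a survey, the gap is the entire content of the theorem.
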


Slightly later,  Guan-Zhou (\cite{guanzhou1311} and \cite{guanzhou1401}) proved a strong openness conjecture theorem as follows.

\begin{theorem}\label{th:opennessconjecture}
Let $\varphi_1\leq \varphi_2\leq \cdots,$ be negative plurisubharmonic functions on the unit polydisc $\Delta ^ n \subset \mathbb{ C}^n$ so that $\varphi_n\nearrow \varphi$. Suppose that
$$
\int_{\Delta^n} |F|^2 e^{-\varphi}d\lambda<+\infty,
$$
$F$ is a holomorphic function on $\Delta ^ n$. Then there exists a number $j\geq 1$, such that
$$
\int_{\Delta_r^n}|F|^2 e^{-\varphi_j}d\lambda<+\infty,
$$
for some $r\in (0,1)$.
\end{theorem}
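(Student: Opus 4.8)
\medskip
\noindent\textbf{Proof strategy.} My plan is to argue by contradiction: first replace the analytic statement by one about multiplier ideal sheaves, and then feed in an $L^2$-extension theorem whose constant is \emph{sharp}, i.e.\ depends on the dimension but not on the particular weight. Since $r\mapsto\int_{\Delta_r^n}|F|^2e^{-\varphi_j}\,d\lambda$ is non-decreasing, the desired conclusion is equivalent to asserting that the germ $(F,0)$ lies in the multiplier ideal $\cI(\varphi_j)_0$ for some $j$. By Nadel's coherence theorem each $\cI(\varphi_j)$ is a coherent ideal sheaf, and $\varphi_j\nearrow\varphi$ forces $\cI(\varphi_1)\subseteq\cI(\varphi_2)\subseteq\cdots\subseteq\cI(\varphi)$, while the hypothesis $\int_{\Delta^n}|F|^2e^{-\varphi}\,d\lambda<+\infty$ says precisely that $(F,0)\in\cI(\varphi)_0$. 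Because $\cO_{\C^n,0}$ is Noetherian, the increasing chain $\{\cI(\varphi_j)_0\}$ stabilizes, say for $j\geq M$, and by coherence this stabilization is uniform on a neighbourhood of $0$; it therefore suffices to prove the sheaf identity $\cI(\varphi)=\cI(\varphi_M)$ near $0$, equivalently $\cI(\varphi)_0\subseteq\bigcup_j\cI(\varphi_j)_0$.

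I would prove this inclusion by induction on $n$. For $n=1$ one argues directly: writing $F=z^kg$ with $g(0)\neq0$, integrability of $|F|^2e^{-\varphi_j}$ near $0$ is governed by the Lelong number of $\varphi_j$ at $0$, these increase with $j$ toward $\nu(\varphi,0)$, and the relevant jumping value is attained at a finite step. For the inductive step, after a generic linear change of coordinates take the hyperplane $H=\{z_n=0\}$, chosen so that $\varphi|_H\not\equiv-\infty$, so that $f:=F|_H\not\equiv0$, and so that Demailly's generic restriction lemma applies simultaneously to $\varphi$ and to every $\varphi_j$, giving $\cI(\varphi_j|_H)_0=\cI(\varphi_j)_0\cdot\cO_{H,0}$ and likewise for $\varphi$. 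By the induction hypothesis on $H\cong\Delta^{n-1}$ there are $j_0\geq1$ and $\rho_0<1$ with $\int_{\Delta_{\rho_0}^{n-1}}|f|^2e^{-\varphi_{j_0}|_H}\,d\lambda<+\infty$, hence also for every $j\geq j_0$ since $\varphi_j\geq\varphi_{j_0}$. Now apply, for each such $j$, the optimal $L^2$-extension theorem of Guan--Zhou on a fixed polydisc $\Delta_\rho^n\Subset\Delta^n$ with plurisubharmonic weight $\varphi_j$ and submanifold $H$: this produces a holomorphic $F_j$ on $\Delta_\rho^n$ with $F_j|_H=f$ and
\begin{equation*}
\int_{\Delta_\rho^n}|F_j|^2e^{-\varphi_j}\,d\lambda\ \leq\ C_n\int_{\Delta_{\rho_0}^{n-1}}|f|^2e^{-\varphi_{j_0}|_H}\,d\lambda\ =:\ C<+\infty,
\end{equation*}
using the extension estimate with weight $\varphi_j$ together with $\varphi_j|_H\geq\varphi_{j_0}|_H$, where the constant $C_n$ is \emph{sharp} and, crucially, independent of $j$.

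To conclude, $F-F_j$ vanishes on $H$, so $F-F_j=z_nG_j$ with $G_j$ holomorphic and $z_nG_j\in\cI(\varphi)_0$ (both $F$ and $F_j$ lie there). Iterating the construction on the successive quotients — the uniformity of $C_n$ being exactly what keeps the successive $L^2$-bounds summable rather than explosive — one writes $F$ as a convergent series $\sum_{i\geq0}z_n^ih_i$ with $h_i$ holomorphic and tail controlled in $L^2(\varphi_j)$ for one fixed large $j$, whence $F\in\cI(\varphi_j)_0$; this contradicts the standing assumption and proves the theorem. (Geometrically, the sharpness of $C_n$ is precisely what prevents the $L^2$-mass of $F$ from escaping into the polar set $\{\varphi=-\infty\}$, which is the real content of the statement.) I expect the two hard points to be: first, the optimal $L^2$-extension theorem itself, which needs a H\"ormander--Ohsawa--Takegoshi estimate tuned so that its constant does not degenerate; and second, turning the uniform bound above into convergence of the recursive scheme — identifying the limit of the $F_j$ with $F$ and transferring finiteness back to a single weight $\varphi_j$. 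The naive induction that uses generic restriction alone and discards the value of the constant does not close, which is exactly why the sharp form of the extension theorem is indispensable.
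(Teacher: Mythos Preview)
The paper does not prove Theorem~\ref{th:opennessconjecture}: it is quoted from Guan--Zhou \cite{guanzhou1311,guanzhou1401} and then invoked as a black box inside the proof of Theorem~\ref{th:main2}. There is therefore no ``paper's own proof'' to compare your argument against.

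On the substance of your proposal: the overall architecture---reduce to the stalk statement $\cI(\varphi)_0\subseteq\bigcup_j\cI(\varphi_j)_0$, induct on $n$, and drive the inductive step with the \emph{sharp} Ohsawa--Takegoshi extension theorem---is indeed one of the known routes to strong openness (close to Hiep's argument). Your identification of the sharp constant as the crucial ingredient is exactly right.

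That said, the sketch has a genuine gap in the inductive step. After producing $F_j$ with $F_j|_H=F|_H$ and the uniform bound, you write $F-F_j=z_nG_j$ and propose to ``iterate on successive quotients'' to obtain a convergent expansion $F=\sum_i z_n^i h_i$ with tail controlled in $L^2(\varphi_j)$ for a single $j$. But $G_j$ already depends on $j$, so the next restriction/extension step produces an object depending on a \emph{new} index, and it is not clear how the scheme collapses to one fixed $j$. The published arguments do not proceed by this kind of Taylor-in-$z_n$ recursion; Guan--Zhou instead run a quantitative contradiction using the optimal extension from points (or sublevel sets) to control the mass concentration on $\{\varphi=-\infty\}$, and Hiep's variant likewise avoids the hyperplane recursion you describe. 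As written, your iteration does not close.

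Two smaller points. First, the base case $n=1$ is correct but not quite for the reason you give: you need both the dimension-one equivalence ``$|z|^{2k}e^{-\varphi}\in L^1_{\rm loc}\iff \nu(\varphi,0)<2k+2$'' (Skoda together with its converse, specific to $n=1$) \emph{and} the fact that $\nu(\varphi_j,0)\searrow\nu(\varphi,0)$ for increasing sequences, which follows from convexity of $r\mapsto\sup_{|z|=r}\varphi$ in $\log r$ but is not automatic. Second, invoking a ``generic restriction'' formula $\cI(\varphi_j|_H)_0=\cI(\varphi_j)_0\cdot\cO_{H,0}$ simultaneously for all $j$ and for $\varphi$ with a single $H$ is delicate: genericity is established one weight at a time, and a countable intersection of generic sets need not contain a hyperplane with the stated property for the full family.
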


We remark that in this result, it suffices to assume that the $\varphi_j$ are locally uniformly bounded above, rather than being negative. Also, the result is equivalently true if we add any fixed bounded measurable function to all the $\varphi_j$ and the $\varphi$.  Based on these results, we are able to improve Taylor's theorem as follows. We will prove

\begin{theorem}  \label{th:main2}
Let $\varphi_1\leq \varphi_2\leq \varphi_3\leq \cdots $ be plurisubharmonic functions on $\mathbb{C}^n$. For any $\epsilon > 0$ let $\widetilde{\varphi}_j = \varphi_j +  \epsilon \log(1+\|z\|^2)$ and $\widetilde{\varphi} = \lim\limits_{j\rightarrow+\infty} \widetilde{\varphi }_j$. Then $\bigcup \limits _{j=1}^\infty H(\widetilde{\varphi}_j)$ is dense in $H(\widetilde{\varphi})$.
\end{theorem}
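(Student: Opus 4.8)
The plan is to approximate a given $f \in H(\widetilde\varphi)$ by holomorphic functions obtained from spatial truncations of $f$ corrected by an $L^2$-solution of $\bar\partial$, using the strong openness theorem to place the truncations (and hence the corrected functions) into the spaces $H(\widetilde\varphi_j)$, and using the strict positivity supplied by the $\epsilon\log(1+\|z\|^2)$-term to make the correction small.

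Fix $f \in H(\widetilde\varphi)$ and $\eta > 0$; we must produce $j$ and $g \in H(\widetilde\varphi_j)$ with $\|f - g\|_{\widetilde\varphi} < \eta$. First I would choose $R$ so large that $\int_{\{\|z\| > R\}} |f|^2 e^{-\widetilde\varphi}\,d\lambda$ is as small as needed, and fix a smooth cut-off $\chi = \chi_R$ which is $1$ on $B_R$, supported in $B_{2R}$, and — this choice matters — built as a function of $\log(1+\|z\|^2)$, so that $\bar\partial\chi$ is a bounded multiple of $\bar\partial\log(1+\|z\|^2)$ on the shell $S := \{R \le \|z\| \le 2R\}$ where it is supported. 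Next I would invoke Theorem~\ref{th:opennessconjecture}: cover $\overline{B_{2R}}$ by finitely many polydiscs on slightly larger polydiscs of which $\int |f|^2 e^{-\widetilde\varphi} < \infty$, apply the strong openness theorem on each (with the bounded continuous term $\epsilon\log(1+\|z\|^2)$ absorbed harmlessly and $\varphi_j$ only locally bounded above, as permitted by the Remark after the theorem), and take the maximum of the finitely many indices produced, obtaining $j_0 = j_0(R)$ with $\int_{B_{2R}} |f|^2 e^{-\widetilde\varphi_{j_0}}\,d\lambda < \infty$.

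With $v := f\,\bar\partial\chi$, a smooth $\bar\partial$-closed $(0,1)$-form supported in $S$, I would then solve $\bar\partial u = v$ on $\mathbb{C}^n$ by the announced variant of Hörmander's $L^2$-estimate, exploiting $i\partial\bar\partial\widetilde\varphi_{j_0} \ge \epsilon\,i\partial\bar\partial\log(1+\|z\|^2) > 0$, and arrange that: (a) $\int_{\mathbb{C}^n} |u|^2 e^{-\widetilde\varphi_{j_0}}\,d\lambda < \infty$, so that $g := \chi f - u$ — which is entire, since $\bar\partial g = v - v = 0$ — belongs to $H(\widetilde\varphi_{j_0})$ (using that $\chi f \in L^2(\widetilde\varphi_{j_0})$ by the previous paragraph); and (b) $\int_{\mathbb{C}^n} |u|^2 e^{-\widetilde\varphi}\,d\lambda$ is arbitrarily small, upon also letting $R \to \infty$. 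Granting (a)--(b), one concludes because $\|f - g\|_{\widetilde\varphi} \le \|(1-\chi)f\|_{\widetilde\varphi} + \|u\|_{\widetilde\varphi}$ and the first summand is the small tail $\big(\int_{\{\|z\|>R\}} |f|^2 e^{-\widetilde\varphi}\big)^{1/2}$.

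The crux, and the step I expect to be genuinely delicate, is (b): obtaining a $\bar\partial$-solution that is simultaneously controlled in the limit weight $e^{-\widetilde\varphi}$ and finite in a level weight $e^{-\widetilde\varphi_{j_0}}$. The difficulty is that on $S$ the inverse of the Fubini--Study type form $i\partial\bar\partial\log(1+\|z\|^2)$ is of size $\sim (1+R^2)$ in the relevant (radial) direction, so the plain Hörmander estimate bounds $\|u\|^2$ only by $\sim (1+R^2)\int_S |f|^2 e^{-\widetilde\varphi_{j_0}}$, and the smallness of the tail $\int_S |f|^2 e^{-\widetilde\varphi}$ does not by itself defeat this factor. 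Recovering it forces one to use the parameter $\epsilon$ and the geometry of the weight carefully — for instance by running the $\bar\partial$-estimate against a weight incorporating the full $(\epsilon+1)\log(1+\|z\|^2)$ (gaining a compensating power of $(1+\|z\|^2)^{-1}$ in the estimate), by splitting $\epsilon$ between curvature and integrability, and/or by letting $R$ and $j_0 = j_0(R)$ tend to infinity along a carefully chosen sequence and invoking monotone/dominated convergence (legitimate by the strong openness step) to replace $\int_S |f|^2 e^{-\widetilde\varphi_{j_0}}$ by its limit $\int_S |f|^2 e^{-\widetilde\varphi}$. This is exactly the point where "a variation of the theorem of Hörmander" is required rather than its classical form, and it is where the hypothesis $\epsilon > 0$ is essential, consistent with the counterexample for the analogous global statement mentioned in the introduction.
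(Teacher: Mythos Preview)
Your global architecture --- truncate $f$ by a radial cut-off, invoke strong openness on the compact support to land in some $H(\widetilde\varphi_{j_0})$, correct with a $\bar\partial$-solution, and control the error by dominated convergence --- is exactly the paper's. The gap is precisely where you say it is: step~(b). You correctly compute that with a cut-off built from $\log(1+\|z\|^2)$ and curvature $\epsilon\,i\partial\bar\partial\log(1+\|z\|^2)$, the radial eigenvalue of the inverse metric is $\sim(1+R^2)$, so plain H\"ormander gives only $\|u\|^2\lesssim (1+R^2)\int_S|f|^2e^{-\widetilde\varphi_{j_0}}$, and none of the patches you list removes this factor. Adding an extra $\log(1+\|z\|^2)$ to the weight shifts the factor from the right side to the left but does not cancel it; splitting $\epsilon$ changes constants, not the order of growth; and dominated convergence lets you replace $\widetilde\varphi_{j_0}$ by $\widetilde\varphi$ in the shell integral but still leaves you multiplying a tail that need not be $o(R^{-2})$ by $R^{2}$.

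The paper closes the gap by a different choice of auxiliary data together with Berndtsson's Donnelly--Fefferman type estimate (Theorem~\ref{th:Hormander1}) rather than plain H\"ormander. One sets $\psi=-\log\log(e+\|z\|^2)$ and takes the cut-off as a function of $\log(-\epsilon\psi)$ (so a function of $\log\log\log(e+\|z\|^2)$). Then $\bar\partial\chi$ is a multiple of $\bar\partial\psi/\psi$, and the twisted estimate with $\Theta=\epsilon^{2}\,i\partial\psi\wedge\bar\partial\psi$ yields
\[
\int_{\mathbb C^n}|u_{j,N}|^2e^{-\widetilde\varphi_j}\,d\lambda\ \le\ \frac{C}{N^{2}}\int_{\{N/(2\epsilon)\le -\psi\le N/\epsilon\}}|f|^2e^{-\widetilde\varphi_j}\,d\lambda,
\]
the $1/N^{2}$ coming from $1/\psi^{2}$ on the support. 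This is the ``variation of H\"ormander'' you anticipate but do not supply: the rank-one form $i\partial\psi\wedge\bar\partial\psi$ matches the radial direction of $\bar\partial\chi$ exactly, and the extra logarithm in $\psi$ converts the unbounded Fubini--Study inverse into a bounded quantity. After this, strong openness plus dominated convergence (which you already have) finishes the proof. Without this device or an equivalent one, your argument does not yet yield~(b).
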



The most direct generalization of the strong openness conjecture to $\mathbb C^n$ is the following:
\begin{conjecture}\label{conj2}
Let $\varphi_1\le \varphi_2\le \cdots$ be plurisubharmonic functions on $\mathbb C^n$
which are locally uniformly bounded above. Let $\varphi=\lim \limits_{k \rightarrow+\infty}\varphi_k.$ If
$f$ is an entire function so that $\int_{\mathbb C^n}|f|^2 e^{-\varphi}d\lambda <\infty$, then
for all large enough $k,$ $\int_{\mathbb{C}^n} |f|^2e^{-\varphi_k}d\lambda <\infty.$
\end{conjecture}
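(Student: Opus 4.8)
The plan is to \emph{refute} Conjecture \ref{conj2} by an explicit counterexample, already in $\C$ (the case $n=1$, which trivially produces counterexamples in every $\C^n$: take $\varphi_k(z)=\psi_k(\log|z_1|)+|z_2|^2+\cdots+|z_n|^2$ and let $f$ depend only on $z_1$, so the extra Gaussian factors contribute a finite nonzero constant and do not affect convergence). The guiding philosophy is that the strong openness theorem (Theorem \ref{th:opennessconjecture}) is intrinsically \emph{local} — it passes to a smaller polydisc $\Delta_r^n$ — whereas the global conjecture permits the gap between $\int|f|^2e^{-\varphi_k}$ and $\int|f|^2e^{-\varphi}$ to be manufactured by mass escaping to infinity. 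The factor $\log(1+\|z\|^2)$ present in Taylor's theorem and in Theorem \ref{th:main2} is exactly the device that suppresses this escape, so deleting it ought to be fatal.

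Concretely I would work radially. Writing $t=\log|z|$, set $\psi(t)=(\max(t,0))^2$, so that $\varphi(z)=\psi(\log|z|)=(\log^+|z|)^2$ is plurisubharmonic (radial, convex in $t$, and constant near the origin so with no singularity there) and grows \emph{super-linearly} in $\log|z|$. For the approximating sequence I would cap the slope: pick $T_k\uparrow\infty$ and let $\psi_k$ coincide with $\psi$ on $\{t\le T_k\}$ and continue as the tangent line to $\psi$ at $T_k$ on $\{t\ge T_k\}$. Each $\psi_k$ is convex with \emph{finite} asymptotic slope $2T_k$, lies below $\psi$, and $\psi_k\nearrow\psi$; hence $\varphi_k:=\psi_k(\log|z|)$ are psh, continuous (so locally bounded above), increasing, with $\lim_k\varphi_k=\varphi$. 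The entire function is taken transcendental with very rapidly decaying Taylor coefficients, e.g. $f(z)=\sum_{m\ge0}e^{-m^2}z^m$.

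The verification reduces to one-dimensional integrals by expanding in monomials: $\int_{\C}|f|^2e^{-\varphi_k}\,d\lambda=2\pi\sum_m e^{-2m^2}I_k(m)$ with $I_k(m)=\int_{\R}e^{(2m+2)t-\psi_k(t)}\,dt$, and likewise $I(m)$ for $\varphi$. Since $\psi_k$ is eventually linear of slope $2T_k$, the integrand of $I_k(m)$ behaves like $e^{(2m+2-2T_k)t}$ as $t\to+\infty$, so $I_k(m)=+\infty$ as soon as $m+1>T_k$; because every coefficient of $f$ is nonzero, a single such term forces $\int_{\C}|f|^2e^{-\varphi_k}\,d\lambda=+\infty$ for every $k$. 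On the other hand the super-linear growth of $\psi$ gives $I(m)\le C\,e^{(m+1)^2}$ (complete the square), and the choice $c_m=e^{-m^2}$ yields $\sum_m e^{-2m^2}I(m)\le C\sum_m e^{-m^2+2m+1}<\infty$, i.e. $\int_{\C}|f|^2e^{-\varphi}\,d\lambda<\infty$. This is precisely the behaviour the conjecture forbids.

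The step demanding the most care is the choice of $f$, and seeing why it is \emph{forced}. With $f\equiv1$ (or any monomial) the example collapses: a Liouville-type theorem shows a subharmonic function on $\C$ that is bounded above must be constant, and the radial slope analysis then shows $\int e^{-\varphi_k}$ can be infinite for at most finitely many $k$ — so no counterexample of the naive kind exists, and the transcendence of $f$ (infinitely many nonzero coefficients probing arbitrarily high effective ``slopes'') is indispensable. I would also record the two structural facts used above, namely that a radial function is subharmonic exactly when it is convex in $\log|z|$, and that the tangent-line truncations of a convex function at an increasing sequence of points form an increasing sequence converging pointwise to it; both are elementary but must be checked to certify that $\{\varphi_k\}$ is a genuine instance of the hypotheses of Conjecture \ref{conj2}.
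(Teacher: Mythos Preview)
Your counterexample is correct and shares the paper's core mechanism: radial weights $\varphi_k$ that grow \emph{linearly} in $\log|z|$ with slope $\to\infty$, so that each $H(\varphi_k)$ contains only polynomials of bounded degree, while the limit $\varphi$ grows \emph{super-linearly} in $\log|z|$ and admits a transcendental $f$. The paper builds $\varphi$ piecewise (setting $\varphi_{k+1}=C_{k+1}+N_{k+1}\log\|z\|$ on $\{\|z\|\ge k+1\}$ with $N_k\uparrow\infty$) and then chooses the Taylor coefficients of $f=\sum\epsilon_k z_1^k$ a posteriori, small enough that two auxiliary integrals $A_\ell,B_\ell$ are summable; you instead take the single explicit weight $\varphi=(\log^+|z|)^2$, approximate by tangent lines, and use the a priori Gaussian coefficients $e^{-m^2}$, verifying membership in $H(\varphi)$ by completing the square. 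Your route is slightly cleaner and more self-contained (no separate lemmas on monomial integrability are needed), while the paper's piecewise construction works directly in $\mathbb C^n$ via $\|z\|$ rather than reducing to $n=1$ with a Gaussian tail; both buy exactly the same conclusion.
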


Nevertheless we have a counterexample for Conjecture \ref{conj2}  in $\mathbb{C}^n$. We will discuss it on the fourth part.

Recently, an approach based on H\"ormander's $L^2$-estimates of $\overline{\partial}$ to the openness conjecture was proposed by Chen \cite{boyongChen-20151}, and we will borrow some techniques of his.

For the sake of convenience we always use $C$ to represent universal constants in this paper.

\section{Weighted $L^2$- estimate for the $\overline{\partial}$-equation}
Let $\Omega \subset \mathbb{C}^n$ be a bounded pseudoconvex domain and let  $\varphi$ be a psh function on $\Omega $. By H\"ormander's $L^2$-existence theorem for the $\overline{\partial}$-equation (see \cite{Hormander66}), we know that for every $\overline{\partial}$-closed $(0,1)$-form $v$ on $\Omega$ with $\int _\Omega  |v|^2e^{-\varphi}d\lambda < +\infty$, there exists a solution $u$ to $\overline{\partial} u =v$ such that
$$
\int _\Omega |u|^2e^{-\varphi}d\lambda \leq C_{n, \rm{diam}(\Omega)} \int _\Omega|v|^2e^{-\varphi}d\lambda.
$$
We say that $u$ is the (unique) $L^2(\Omega, \varphi)$-minimal solution of the $\overline{\partial}$-equation if $u\bot \rm{Ker} \overline{\partial}$ in $L^2(\Omega, \varphi)$, i.e., $u$ has minimal norm $\|\cdot\|$ among all solutions.  \\

In this paper we will  first give the following main estimate by H\"ormander \cite{Hormander66}  (see also \cite{boyongChen-2015}):

\begin{theorem}\label{th:Hormander'sestimate}
Let $\Omega \subset \mathbb{C}^n$ be a pseudoconvex domain.  Let $\varphi$ be a psh function on $\Omega$ satisfying
$$
i\partial\overline{\partial}\varphi \geq \Theta
$$
in the sense of distributions for some continuous positive $(1,1)$ -form $\Theta$ on $\Omega$. For any $\overline{\partial}-$ closed $(0,1)-$ form $v$ with
$$
\int_\Omega|v|^2_\Theta e^{-\varphi}d\lambda < +\infty ,
$$
there exists $u\in L^2(\Omega, \rm{loc})$ such that $\overline{\partial} u = v $ and
$$
\int_{\Omega}|u|^2 e^{-\varphi}d\lambda \leq \int_{\Omega}|v|^2_\Theta e^{-\varphi}d\lambda.
$$
\end{theorem}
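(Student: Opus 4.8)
The plan is to reduce the stated estimate — which is for an arbitrary (possibly unbounded) pseudoconvex domain $\Omega$ and a psh weight $\varphi$ with $i\partial\overline{\partial}\varphi \geq \Theta$ for a merely \emph{continuous} positive $(1,1)$-form $\Theta$ — to the classical bounded, smooth case by a two-stage approximation: first exhaust $\Omega$ by relatively compact smooth pseudoconvex subdomains, and second regularize $\varphi$ by convolution. First I would write $\Omega = \bigcup_\nu \Omega_\nu$ with $\Omega_\nu \Subset \Omega_{\nu+1}$ smoothly bounded and pseudoconvex; on each $\Omega_\nu$ the weight $\varphi$ can be smoothed to $\varphi_{\nu,\delta} = \varphi * \rho_\delta$ (decreasing to $\varphi$ as $\delta \downarrow 0$), which is smooth psh and still satisfies $i\partial\overline{\partial}\varphi_{\nu,\delta} \geq \Theta_\delta$, where $\Theta_\delta = \Theta * \rho_\delta \to \Theta$ locally uniformly by continuity of $\Theta$. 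On the fixed $\Omega_\nu$ I would then further shrink: since $\Theta$ is continuous and positive on $\overline{\Omega_\nu}$, for $\eta>0$ small one has $\Theta_\delta \geq (1-\eta)\Theta$ on $\Omega_\nu$ once $\delta$ is small, so I may replace $\Theta$ by $(1-\eta)\Theta$ at the cost of a factor $(1-\eta)^{-1}$ that I will send to $1$ at the end.

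The core step is then H\"ormander's basic identity/inequality on the smoothly bounded pseudoconvex $\Omega_\nu$ with the smooth strictly psh weight $\varphi_{\nu,\delta}$: for $(0,1)$-forms $\alpha$ in the domain of $\overline{\partial}^*$ one has the a priori estimate
\[
\int_{\Omega_\nu} |\alpha|^2_{\Theta_\delta}\, e^{-\varphi_{\nu,\delta}}\, d\lambda
\;\lesssim\; \|\overline{\partial}\alpha\|^2 + \|\overline{\partial}^*\alpha\|^2,
\]
coming from the Bochner--Kodaira--Nakano formula together with the curvature hypothesis $i\partial\overline{\partial}\varphi_{\nu,\delta}\geq \Theta_\delta$ (the boundary term has the right sign by pseudoconvexity). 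By the standard duality/Hilbert-space argument (Riesz representation applied to the functional $\alpha \mapsto \langle v,\alpha\rangle$ on $\mathrm{Ker}\,\overline{\partial}$, controlled using Cauchy--Schwarz against the weighted norm $|v|_{\Theta_\delta}$), this yields $u_{\nu,\delta}$ with $\overline{\partial}u_{\nu,\delta}=v$ on $\Omega_\nu$ and
\[
\int_{\Omega_\nu} |u_{\nu,\delta}|^2\, e^{-\varphi_{\nu,\delta}}\, d\lambda
\;\leq\; \int_{\Omega_\nu} |v|^2_{\Theta_\delta}\, e^{-\varphi_{\nu,\delta}}\, d\lambda
\;\leq\; (1-\eta)^{-1}\int_{\Omega} |v|^2_{\Theta}\, e^{-\varphi}\, d\lambda ,
\]
the last inequality because $\varphi_{\nu,\delta}\geq \varphi$ (so $e^{-\varphi_{\nu,\delta}}\leq e^{-\varphi}$) and $|v|^2_{\Theta_\delta}\leq (1-\eta)^{-1}|v|^2_\Theta$. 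Here the constant on the right-hand side is exactly $1$ in the limit — this is the sharp form of H\"ormander's estimate, with no dependence on $n$ or $\mathrm{diam}(\Omega)$, which is the whole point of stating it this way.

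The final step is the limiting argument. For fixed $\nu$, let $\delta\downarrow 0$: the bound on $\|u_{\nu,\delta}\|_{L^2(\Omega_\nu,\varphi)}$ (which is $\leq \|u_{\nu,\delta}\|_{L^2(\Omega_\nu,\varphi_{\nu,\delta})}$ up to monotone convergence of the weights, or handled by noting $\varphi_{\nu,\delta}$ stays bounded below on compacts) gives, after passing to a weakly convergent subsequence, a solution $u_\nu$ on $\Omega_\nu$ with $\overline{\partial}u_\nu=v$ and $\int_{\Omega_\nu}|u_\nu|^2 e^{-\varphi}d\lambda \leq (1-\eta)^{-1}\int_\Omega |v|^2_\Theta e^{-\varphi}d\lambda$; then let $\eta\to 0$. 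Finally let $\nu\to\infty$: taking $u_\nu$ to be the minimal solution on $\Omega_\nu$, the uniform bound lets me extract a weak limit $u$ on all of $\Omega$ with $\overline{\partial}u=v$ and $\int_\Omega |u|^2 e^{-\varphi}d\lambda \leq \int_\Omega |v|^2_\Theta e^{-\varphi}d\lambda$, using weak lower semicontinuity of the norm and the fact that $\overline{\partial}$ is a closed operator. The main obstacle — really the only delicate point — is bookkeeping the interchange of the three limits ($\delta\to 0$, $\eta\to 0$, $\nu\to\infty$) while keeping the constant pinned at $1$: one must make sure the regularization $\varphi_{\nu,\delta}$ is defined on a neighborhood of $\overline{\Omega_\nu}$ (so shrink slightly), and that the weak limits genuinely solve $\overline{\partial}u=v$ in the distributional sense, which follows from boundedness in $L^2_{\mathrm{loc}}$ and the closedness of $\overline{\partial}$. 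If one prefers, the whole $\delta$-regularization can be cited directly from H\"ormander \cite{Hormander66} or from Chen \cite{boyongChen-2015}, in which case the proof collapses to the exhaustion plus the weak-limit extraction.
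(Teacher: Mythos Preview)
Your outline is correct and is the standard route to this estimate: exhaust $\Omega$ by smoothly bounded pseudoconvex $\Omega_\nu$, mollify $\varphi$ so that $i\partial\overline{\partial}\varphi_{\nu,\delta}\geq \Theta_\delta$ with $\Theta_\delta\to\Theta$ uniformly on $\overline{\Omega_\nu}$, invoke the Bochner--Kodaira--H\"ormander a~priori inequality plus duality to get the sharp constant~$1$, and pass to weak limits. The one place to be a bit more careful than you indicate is the passage from the bound in $L^2(\Omega_\nu,\varphi_{\nu,\delta})$ to the bound in $L^2(\Omega_\nu,\varphi)$: since $\varphi_{\nu,\delta}\geq\varphi$ the inequality goes the wrong way directly, and one really needs the two-step argument you allude to (weak lower semicontinuity against the fixed weight $e^{-\varphi_{\nu,\delta_0}}$ for $\delta\leq\delta_0$, then monotone convergence as $\delta_0\downarrow 0$).

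As for comparison with the paper: there is nothing to compare. The paper does \emph{not} prove Theorem~\ref{th:Hormander'sestimate}; it merely states it, attributes the formulation to Demailly, and cites H\"ormander~\cite{Hormander66} and Chen~\cite{boyongChen-2015}. The proof that follows in Section~2 is of the subsequent Berndtsson theorem (Theorem~\ref{th:Hormander1}), which \emph{uses} Theorem~\ref{th:Hormander'sestimate} as a black box. Your final sentence already anticipates this: the paper's ``proof'' is exactly the citation you mention.
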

Here we use the notation: Let $\Theta = i \sum\limits_{j,k} \Theta_{j,k} dz_j \wedge d\overline{z}_k$ be a continuous positive $(1,1)$-form on $\Omega$, i.e., the matrix $(\Theta_{j,k})$ is positive definite at every point. The pointwise norm of a $(0,1)$-form $v$ with respect to $\Theta$ is defined by
$$
|v|^2_\Theta : = \sum _{j,k} \Theta^{j,k} v_j\overline{v}_k
$$
where $(\Theta^{j,k})$ denotes the matrix inverse to $(\Theta_{j,k})$.

\textbf{Remark.} Actually it was Demailly who first gave the above formulation of H\"ormander's estimate. The first remarkable variation of H\"ormander's estimate is the following, \cite{Donnelly1983},

\begin{theorem}[Donnelly-Fefferman]\label{th:D-Fth}
Let $\Omega \subset \mathbb{C}^n$ be a pseudoconvex domain and $\varphi \in psh(\Omega)$. Suppose $\psi $ is a $C^2$ strictly psh function which satisfies
\begin{eqnarray}\label{eq:assumption}
r i\partial\overline{\partial} \psi \geq i\partial \psi \wedge \overline{\partial} \psi
\end{eqnarray}
for some $r >0$. For each $\overline{\partial}$-closed $(0,1)$-form $v$ there exists a solution $u$ of $\overline{\partial} u = v$ satisfying
\begin{eqnarray}\label{eq:estimate1}
\int _{\Omega} |u|^2e^{-\varphi}d\lambda \leq const_r\int _{\Omega} |v|_{i\partial \overline{\partial} \psi}^2e^{-\varphi}d\lambda,
\end{eqnarray}
provided that the RHS of (\ref{eq:estimate1}) is finite.
\end{theorem}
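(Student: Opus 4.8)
The final statement is the Donnelly–Fefferman theorem itself, so the plan must establish the weighted estimate \eqref{eq:estimate1}, not merely invoke it.

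\bigskip

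The plan is to reduce the Donnelly--Fefferman estimate to the basic H\"ormander estimate of Theorem \ref{th:Hormander'sestimate} by a clever twist of the weight. The starting observation is that the hypothesis $r\, i\partial\overline{\partial}\psi \geq i\partial\psi\wedge\overline{\partial}\psi$ controls the gradient of $\psi$ in terms of its complex Hessian. First I would introduce the auxiliary weight $\phi := \varphi + \psi - t\psi$ for a parameter $t$ to be chosen (equivalently, write the solution $u$ to $\overline{\partial}u=v$ in terms of the H\"ormander solution for a modified weight and a modified form). Concretely, the classical trick is to seek the solution in the form obtained by applying Theorem \ref{th:Hormander'sestimate} with the weight $\varphi$ replaced by $\varphi + \psi$, but applied to the twisted form $e^{-\psi}v$ or after a substitution $u = e^{\psi}w$; the point is to arrange that the curvature term $i\partial\overline{\partial}(\varphi+\psi)$ absorbs the extra first-order term that appears upon differentiating the exponential factor.

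The key steps, in order, are as follows. First, set $\Theta := i\partial\overline{\partial}\psi$, which is a continuous positive $(1,1)$-form since $\psi$ is $C^2$ strictly psh, so that $|v|^2_{i\partial\overline{\partial}\psi} = |v|^2_\Theta$ is exactly the norm appearing on the right-hand side of \eqref{eq:estimate1}. Second, I would consider the minimal-norm solution and perform the computation that produces the cross term: writing the twisted equation and integrating by parts, one finds a term of the shape $\int_\Omega 2\,\mathrm{Re}\langle \overline{\partial}\psi, \cdot\rangle$ which must be dominated. Third, I would invoke the hypothesis \eqref{eq:assumption} in the form of a pointwise Cauchy--Schwarz bound $|\langle \overline{\partial}\psi, v\rangle|^2 \leq |\overline{\partial}\psi|^2_{\Theta}\,|v|^2_{\Theta} \leq r\,|v|^2_{\Theta}$, where the last inequality uses precisely that $i\partial\psi\wedge\overline{\partial}\psi \leq r\,i\partial\overline{\partial}\psi$ means $|\overline{\partial}\psi|^2_\Theta \leq r$. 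Fourth, optimizing over the parameter $t$ (the splitting between $\varphi+\psi$ and the correction) yields the constant $const_r$ depending only on $r$, completing the estimate. Throughout, pseudoconvexity of $\Omega$ is what licenses the application of Theorem \ref{th:Hormander'sestimate}.

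I expect the main obstacle to be the bookkeeping in the integration-by-parts step and the precise choice of the twisting parameter that makes the cross term absorbable while keeping a positive lower bound on the effective curvature. The delicate point is that after the substitution one does not immediately have a psh weight with a clean curvature lower bound; one must show that the effective weight $\varphi + s\psi$ (for a suitable $s$ depending on $r$ and $t$) still satisfies $i\partial\overline{\partial}(\varphi + s\psi) \geq \text{(positive multiple of } \Theta)$ so that Theorem \ref{th:Hormander'sestimate} applies with the correct norm on $v$. Balancing the gain from the curvature against the loss from the first-order cross term is exactly where the constant $const_r$ emerges, and getting a finite, $r$-dependent constant (rather than one that blows up) is the crux. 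A secondary technical care is the finiteness hypothesis on the right-hand side of \eqref{eq:estimate1}, which guarantees all the integrals in the manipulation are well-defined and justifies the integration by parts.
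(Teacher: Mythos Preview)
Your plan is correct and is essentially the same argument the paper carries out: the paper proves Berndtsson's generalization (Theorem~\ref{th:Hormander1}), of which Donnelly--Fefferman is the special case, by exactly the twist you describe---take the $L^2(\varphi)$-minimal solution $u$, note that $u e^{\psi}$ is the $L^2(\varphi+\psi)$-minimal solution of $\overline{\partial}\widetilde u=\overline{\partial}(u e^{\psi})=(v+\overline{\partial}\psi\cdot u)e^{\psi}$, apply Theorem~\ref{th:Hormander'sestimate} with weight $\varphi+\psi$, split the cross term by Cauchy--Schwarz with parameter $t$, use $|\overline{\partial}\psi|^2_{i\partial\overline{\partial}\psi}\le r$ to absorb, and optimize in $t$. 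The only technical refinements the paper adds beyond your outline are the exhaustion $\Omega_j\Subset\Omega$ and smoothing $\varphi\mapsto\varphi_j$ needed to ensure $\psi$ is bounded and the a~priori finiteness of $\int|u_j|^2e^{\psi-\varphi_j}$ before absorption, followed by a weak-limit passage.
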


Later, Berndtsson \cite{Boberndtsson2001} generalized theorem \ref{th:D-Fth} as follows.

\begin{theorem} [Berndtsson] \label{th:Hormander1}
Let $\Omega \subset \mathbb{C}^n$ be a pseudoconvex domain and $\varphi \in psh(\Omega)$.  $\psi$ is a $C^2$ strictly psh function satisfying
$$
r i\partial \overline{\partial} (\varphi+\psi) \geq i\partial \psi \wedge \overline{\partial} \psi
$$
in the sense of distributions for some $0<r<1$. Then for each $\overline{\partial}$-closed $(0,1)$-form $v$, there is a  solution of $\overline{\partial} u =v$ which satisfies
\begin{eqnarray}\label{ineq:estimate1}
\int _{\Omega} |u|^2e^{\psi-\varphi}d\lambda \leq \frac{6}{(1-r)^2} \int _{\Omega} |v|_\Theta^2e^{\psi-\varphi}d\lambda
\end{eqnarray}
for every continuous positive $(1,1)-$ form $\Theta$ with $i\partial \overline{\partial}(\varphi+\psi)\ge \Theta$ in the sense of distributions.
\end{theorem}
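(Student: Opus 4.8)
The plan is to deduce the estimate from the $L^2$-machinery behind H\"ormander's theorem — the Bochner--Kodaira--Morrey--Kohn identity together with $L^2$-duality — the new input being to run it at the \emph{plurisubharmonic} weight $\varphi+\psi$; the hypothesis $r\,i\partial\overline{\partial}(\varphi+\psi)\ge i\partial\psi\wedge\overline{\partial}\psi$ is exactly what makes the Donnelly--Fefferman absorption go through. As a first step I would reduce to a model case: exhausting $\Omega$ by smoothly bounded strictly pseudoconvex $\Omega_\nu\Subset\Omega$ and replacing $\varphi$ (and harmlessly $\psi$) by convolution regularizations, one may assume $\Omega$ is smoothly bounded and strictly pseudoconvex with $\varphi,\psi\in C^\infty(\overline\Omega)$; both curvature inequalities survive up to errors that vanish in the limit, solutions on the pieces are recombined by weak compactness and a diagonal argument, and $6/(1-r)^2$ is stable under these passages. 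In this model case it suffices, by H\"ormander's duality lemma, to prove the a priori inequality
\[
\int_\Omega\langle\Theta\,\xi,\overline\xi\rangle\,e^{-\varphi-\psi}\,d\lambda\ \le\ \frac{6}{(1-r)^2}\int_\Omega|\overline{\partial}^{*}_{\varphi}\xi|^2\,e^{-\varphi-\psi}\,d\lambda
\]
for every $\overline{\partial}$-closed $(0,1)$-form $\xi\in\mathrm{Dom}(\overline{\partial}^{*}_\varphi)$, where $\overline{\partial}^{*}_\varphi$ is the formal adjoint of $\overline{\partial}$ for the weight $e^{-\varphi}$. The point to notice is that the integrating weight $e^{-\varphi-\psi}$ deliberately differs from the weight $e^{-\varphi}$ defining the adjoint, and this discrepancy is precisely what will produce the factor $e^{\psi-\varphi}$ in the conclusion.

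For the a priori inequality I would invoke the Bochner--Kodaira--Morrey--Kohn inequality at the plurisubharmonic weight $\varphi+\psi$ — the $\overline\partial$-Neumann domain condition being weight-independent, $\xi\in\mathrm{Dom}(\overline{\partial}^{*}_{\varphi+\psi})$ as well. For $\overline\partial$-closed $\xi$ the term $\|\overline\partial\xi\|^2$ vanishes and the gradient and (by pseudoconvexity) boundary terms are nonnegative, so
\[
\int_\Omega|\overline{\partial}^{*}_{\varphi+\psi}\xi|^2\,e^{-\varphi-\psi}\,d\lambda\ \ge\ \int_\Omega\langle i\partial\overline{\partial}(\varphi+\psi)\,\xi,\overline\xi\rangle\,e^{-\varphi-\psi}\,d\lambda .
\]
Pointwise one has $\overline{\partial}^{*}_{\varphi+\psi}\xi=\overline{\partial}^{*}_\varphi\xi+\langle\xi,\overline{\partial}\psi\rangle$ and $|\langle\xi,\overline{\partial}\psi\rangle|^2=\langle i\partial\psi\wedge\overline{\partial}\psi\,\xi,\overline\xi\rangle$, which by hypothesis is $\le r\,\langle i\partial\overline{\partial}(\varphi+\psi)\,\xi,\overline\xi\rangle$. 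Inserting $|\overline{\partial}^{*}_{\varphi+\psi}\xi|^2\le(1+\varepsilon)|\overline{\partial}^{*}_\varphi\xi|^2+(1+\varepsilon^{-1})|\langle\xi,\overline{\partial}\psi\rangle|^2$ and absorbing the gradient term into the curvature yields
\[
(1+\varepsilon)\int_\Omega|\overline{\partial}^{*}_\varphi\xi|^2\,e^{-\varphi-\psi}\,d\lambda\ \ge\ \bigl(1-(1+\varepsilon^{-1})r\bigr)\int_\Omega\langle i\partial\overline{\partial}(\varphi+\psi)\,\xi,\overline\xi\rangle\,e^{-\varphi-\psi}\,d\lambda ,
\]
and since $i\partial\overline{\partial}(\varphi+\psi)\ge\Theta$, the right-hand side is $\ge\bigl(1-(1+\varepsilon^{-1})r\bigr)\int_\Omega\langle\Theta\,\xi,\overline\xi\rangle\,e^{-\varphi-\psi}\,d\lambda$. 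Any $\varepsilon$ with $(1+\varepsilon^{-1})r<1$, for instance $\varepsilon=2r/(1-r)$, then gives the a priori inequality with a constant no larger than $6/(1-r)^2$.

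Finally I would feed this into H\"ormander's duality lemma with $(0,1)$-forms measured in $L^2(e^{-\varphi})$ (so the adjoint occurring is $\overline{\partial}^{*}_\varphi$) and $(0,0)$-forms measured in $L^2(e^{\psi-\varphi})$: with these choices $\|\overline{\partial}^{*}\xi\|^2$ in the pairing equals $\int_\Omega|\overline{\partial}^{*}_\varphi\xi|^2 e^{-\varphi-\psi}\,d\lambda$, while the pointwise Cauchy--Schwarz $|\langle v,\overline\xi\rangle|\le|v|_\Theta\,\langle\Theta\,\xi,\overline\xi\rangle^{1/2}$, carried out in $L^2(e^{-\varphi})$ after shifting a factor $e^{\psi/2}$ onto $v$ and $e^{-\psi/2}$ onto $\xi$, produces $\int_\Omega|v|^2_\Theta e^{\psi-\varphi}\,d\lambda$ on the data side and $\int_\Omega\langle\Theta\,\xi,\overline\xi\rangle e^{-\varphi-\psi}\,d\lambda$ on the other, which the a priori inequality controls. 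Thus for every $\overline\partial$-closed $v$ with $\int_\Omega|v|^2_\Theta e^{\psi-\varphi}<\infty$ one obtains $u$ with $\overline\partial u=v$ and $\int_\Omega|u|^2 e^{\psi-\varphi}\le\frac{6}{(1-r)^2}\int_\Omega|v|^2_\Theta e^{\psi-\varphi}$; undoing the reductions of the first step finishes the proof.

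The main obstacle is the weight bookkeeping in this last step. The whole mechanism rests on the a priori inequality being set up with an integrating weight ($e^{-\varphi-\psi}$) distinct from the adjoint weight ($e^{-\varphi}$); it is this asymmetry, and not any naive twisting of $\varphi$ by a factor $e^{c\psi}$ (which fails because it alters the ambient $L^2$-norm and hence the desired conclusion), that H\"ormander's lemma converts into the gain from $e^{-\varphi-\psi}$ to $e^{\psi-\varphi}$ — one has to arrange the Hilbert-space weights so that they cooperate and keep track of the constant. The remaining points — good behaviour of $i\partial\overline{\partial}\psi$, of $i\partial\psi\wedge\overline{\partial}\psi$, and of the two curvature hypotheses under the regularizations of the first step (where $\psi$ is only assumed $C^2$), and the sign of the boundary term — are routine.
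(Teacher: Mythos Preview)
Your argument is correct but proceeds quite differently from the paper's. You go back to the Bochner--Kodaira--Morrey--Kohn identity at the weight $\varphi+\psi$, convert it into an a priori inequality for $\overline{\partial}^*_\varphi$ against $\Theta$ with the \emph{integrating} weight $e^{-\varphi-\psi}$, and then feed this into H\"ormander's duality with mismatched Hilbert-space weights on source ($e^{\psi-\varphi}$) and target ($e^{-\varphi}$); this is essentially Berndtsson's original approach, and your bookkeeping (the adjoint for these mixed weights has $L^2(e^{\psi-\varphi})$-norm squared equal to $\int|\overline{\partial}^*_\varphi\xi|^2e^{-\varphi-\psi}$, the Cauchy--Schwarz split with $e^{\pm\psi/2}$) is right. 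The paper, by contrast, never unpacks the Kodaira identity: it uses Theorem~\ref{th:Hormander'sestimate} purely as a black box, twice. First it produces \emph{some} solution on $\Omega_j$ and replaces it by the $L^2(\Omega_j,\varphi_j)$-minimal one $u_j$; the key observation is then that $u_je^\psi$ is automatically the $L^2(\Omega_j,\varphi_j+\psi)$-minimal solution of $\overline{\partial}\widetilde u=\overline{\partial}(u_je^\psi)=(v+u_j\,\overline{\partial}\psi)e^\psi$, so a \emph{second} application of Theorem~\ref{th:Hormander'sestimate} bounds $\int_{\Omega_j}|u_j|^2e^{\psi-\varphi_j}$ by $\int|v+u_j\overline{\partial}\psi|^2_{i\partial\overline{\partial}(\varphi_j+\psi)}e^{\psi-\varphi_j}$, and the same Cauchy--Schwarz-with-parameter absorption via $r\,i\partial\overline{\partial}(\varphi+\psi)\ge i\partial\psi\wedge\overline{\partial}\psi$ closes the loop. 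Both routes give the optimized constant $1/(1-\sqrt r)^2\le 6/(1-r)^2$. Your approach makes the mechanism behind the weight jump $e^{-\varphi-\psi}\to e^{\psi-\varphi}$ more transparent (it is exactly the asymmetric duality), whereas the paper's trick---due to Bo-Yong Chen---is softer: it needs only the \emph{statement} of H\"ormander's theorem and the single linear-algebra observation that minimality with respect to $\varphi_j$ transfers, after multiplication by $e^\psi$, to minimality with respect to $\varphi_j+\psi$.
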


For the reader's convenience we include the proof of Theorem \ref{th:Hormander1} here.

\begin{proof} We note at first that it suffices to prove the Theorem in the case the right hand side of (\ref{ineq:estimate1}) is finite.
So we suppose that $v$ is a $\overline{\partial}$-closed $(0,1)$-form on $\Omega$ with
$\int _{\Omega} |v|_\Theta ^2e^{\psi-\varphi}d\lambda<\infty.$

We  exhaust $\Omega$ by a sequence of pseudoconvex domains $\Omega_j \subset \subset \Omega$ with smooth boundaries such that $\overline{\Omega}_j \subset \Omega_{j +1}$, $\Omega =\cup \Omega _j$. For each $j$, we may choose  smooth strictly psh function $\varphi_j$ on $\Omega_{j+1}$ such that $\varphi_j$ decrease monotonically to $\varphi$ on $\Omega$ as $ j \rightarrow  \infty$ and 
$$
i \partial \overline{\partial} (\varphi_j +\psi)  \geq \Theta
$$
on $\overline{\Omega}_j$ for each continuous positive $(1,1)-$ form $\Theta$ with $i\partial \overline{\partial}(\varphi+\psi)\ge \Theta$. To see this, simply note that 
$$
i \partial \overline {\partial} (\varphi \ast \theta _\epsilon) = (i \partial \overline {\partial } \varphi )\ast \theta_\epsilon \rightarrow i \partial \overline {\partial} \varphi
$$
on $\overline{\Omega}_j$ as $\epsilon \rightarrow 0$, where $\theta$ is a standard Friedriches mollifier. It suffices to take $\varphi_j = \varphi \ast \theta_{\epsilon_j} + (|z|^2 +1)/j$ with $\epsilon_j = \min \{d(\Omega_j, \partial \Omega), 1/j\} \ll 1$.

Since $i\partial \overline{\partial} (\varphi_j+\psi) \geq \Theta$, $i\partial \overline{\partial} (\varphi_j+\psi)$ is a continuous positive $(1,1)$-form on $\Omega_j$ and $\psi$ is bounded on $\Omega_j$, we obtain that
\begin{eqnarray}
 \int_{\Omega_j}|v|^2_{i\partial \overline{\partial} (\varphi_j+\psi)} e^{-(\psi+\varphi_j)}d\lambda  \leq c_\psi\int_{\Omega_j}|v|^2_\Theta e^{\psi-\varphi}d\lambda<\infty
\end{eqnarray} \label{ineq:1}
since $|v|^2_{i\partial \overline{\partial} (\varphi_j+\psi)} \leq |v|^2_\Theta$. Therefore  Theorem \ref{th:Hormander'sestimate} applies 
with $\varphi_j+\psi$ instead of $\varphi.$ Hence there is a solution to $\overline\partial u_j=v$ on
$\Omega_j$ so that 
\begin{eqnarray}\label{ineq:2}
\int_{\Omega_j} |u_j|^2e^{-\psi-\varphi_j} d \lambda <\infty.
\end{eqnarray}
Since $\psi$ is bounded, $\int_{\Omega_j} |u_j|^2e^{-\varphi_j} d\lambda <\infty.$ We will assume that $u_j$ is chosen as the
unique solution which is perpendicular to the holomorphic functions, i.e.
$\int_{\Omega_j} u_j \cdot \overline{g}e^{-\varphi_j} d\lambda=0$ for all holomorphic functions on $\Omega$
with $\int_{\Omega_j}|g|^2e^{-\varphi_j}d\lambda<\infty.$ 

Because of the boundedness of $\psi$ we have $L^2(\Omega_j, \varphi_j) = L^2(\Omega_j, \varphi_j+\psi)$, and $u_je^\psi  \bot \rm{Ker }     \overline{\partial}$ in $L^2(\Omega_j, \varphi_j+ \psi ) $, i.e.,
$$
\int _{\Omega_j }u_je^\psi \cdot \overline{g} e^{-\psi-\varphi_j}d\lambda  =0
$$
for any such $g$ in $L^2(\Omega_j, \varphi_j+\psi)$.
Thus $u_je^\psi$ is the $L^2(\Omega_j,  \varphi_j+\psi)$-minimal solution of the equation
$$
\overline{\partial} \widetilde{u} = \overline{\partial} (u_je^\psi ).
$$
It follows from Theorem \ref{th:Hormander'sestimate} that
\bea \label{eq: dbarestimate}
\int_{\Omega_j} | u_j|^2e^{ \psi - \varphi_j }d\lambda &= & \int_{\Omega_j} | u_j e^\psi |^2e^{-\psi - \varphi_j }d\lambda \nonumber \\
& \leq & \int_{\Omega_j}| \overline{\partial} (u_je^ \psi)|^2_{i\partial \overline{\partial} (\varphi_j+\psi)} e^{ -\psi - \varphi_j}d\lambda\nonumber \\
&=& \int_{\Omega_j} |v+ \overline{\partial} \psi \wedge u_j|_{i\partial \overline{\partial} (\varphi_j+\psi)}^2 e^{ \psi - \varphi_j }d\lambda\nonumber \\
& \le & (1+ \frac{1}{t}) \int_{\Omega_j}  |v|^2 _{i\partial \overline{\partial} (\varphi_j+\psi)} e^{ \psi - \varphi_j } d\lambda \\ && + (1+t) \int_{\Omega_j} |u_j|^2 |\overline{\partial} \psi|^2_ {i\partial \overline{\partial} (\varphi_j+\psi)} e^{ \psi - \varphi_j }d\lambda \\
& \le & (1+ \frac{1}{t}) \int_{\Omega_j}  |v|^2 _\Theta e^{ \psi - \varphi_j }d\lambda \\  && 
+ (1+t) \int_{\Omega_j} |u_j|^2 |\overline{\partial} \psi|^2_ {\frac{1}{r} i\partial \psi \wedge \overline{\partial}\psi} e^{ \psi - \varphi_j}d\lambda,\nonumber \\
& \le & (1+ \frac{1}{t}) \int_{\Omega_j}  |v|^2 _\Theta e^{ \psi - \varphi_j} d\lambda \\ &&+ (1+t)r \int_{\Omega_j} |u_j|^2 e^{ \psi - \varphi_j}d\lambda,\nonumber \\
\eea
in view of
$$
 i\partial \overline{\partial} (\varphi_j+\psi) \geq \frac{1}{r}i\partial \psi \wedge \overline{\partial}\psi
$$
where $0<t<1$.  Since
$$\int_{\Omega_j} |u_j|^2e^{ \psi - \varphi_j} d\lambda\le
const_\psi\int_{\Omega_j} |u_j|^2e^{- \psi - \varphi_j} d\lambda<\infty$$
by (\ref{ineq:2}), so we have
\begin{eqnarray}  \label{ineq:weightedestimate}
\int_{\Omega_j} |u_j|^2e^{ \psi - \varphi_j }d\lambda \le \frac{1+\frac{1}{t}}{1-(1+t) r } \int_{\Omega_j} |v |_ \Theta ^2e^{ \psi - \varphi_j }d\lambda
\end{eqnarray}
provided $(1+t)r< 1$. It is not difficult to see that the cofficient in (\ref{ineq:weightedestimate}) attains the minimum $\frac{1}{(1-  \sqrt{r})^2}$ when $t= \frac{1}{\sqrt{r}}- 1 $.

Since $\{u_j\}$ is uniformly $L^2$ on each compact set of $\Omega$, so we may choose a sequence by the standard diagonal sequence argument, which is still denoted by $\{u_j\}$ for the sake of simplicity, such that $u_j \rightarrow u$ weakly in $L^2(\Omega,\rm{loc})$. For each fixed $k$, according to one corollary of weak convergence theorem we have 
\bea
\int_{\Omega_k} |u|^2 e^{\psi-\varphi_k} d \lambda & \leq  & \liminf \limits_{j \rightarrow + \infty}\int_{\Omega_k}  |u_j|^2 e^{\psi - \varphi_k} \\
& \leq & \liminf \limits_{j \rightarrow + \infty}\int_{\Omega_j}  |u_j|^2 e^{\psi - \varphi_j}  \\
&\leq & \liminf \limits_{j \rightarrow + \infty} \frac{6}{(1-r)^2}\int_{\Omega_j} |v|_\Theta ^2 e^{\psi - \varphi_j}  d\lambda \\
& \leq &   \liminf \limits_{j \rightarrow + \infty}  \frac{6}{(1-r)^2} \int_\Omega |v|_\Theta ^2 e^{\psi - \varphi}  d\lambda \\
&=&  \frac{6}{(1-r)^2}\int_\Omega |v|_\Theta ^2 e^{\psi - \varphi}  d\lambda, \\
\eea
so that 
\bea
\int_\Omega |u|^2 e^{\psi - \varphi} d\lambda  & = &\lim\limits_{k \rightarrow + \infty} \int_\Omega \chi_{\Omega_k}\cdot |u|^2 e^{\psi-\varphi_k} d \lambda =\lim\limits_{k \rightarrow + \infty} \int_{\Omega_k} |u|^2 e^{\psi-\varphi_k} d \lambda \\ 
& \leq  &\frac{6}{(1-r)^2}\int_\Omega |v|_\Theta e^2 e^{\psi - \varphi}  d\lambda\\
\eea
in view of the Lebesgue monotone convergence theorem. The proof is complete.

\end{proof}

\section{Proof of Theorem  \ref{th:main2}}

Our proof depends on $L^2$-theory for the $\overline{\partial}$-operator.

\begin{proof}[Proof of theorem \ref{th:main2}]
Notice that we can replace $\log(1+\|z\|^2)$ by $\log(e+\|z\|^2)$ without changing the spaces and the norms because of equivalence. 

  Here we just give the proof when $0<\varepsilon \leq 1 $. The general case follows by
first adding $(\varepsilon-1)\log (e+\|z\|^2)$ to all $\varphi_j$ and $\varphi.$
Let $\chi : \mathbb{R} \rightarrow [0,1]$ be a smooth function on $\mathbb{C}^n$ satisfying $\chi| _{(-\infty, \frac{1}{2})} = 1$, $\chi| _{(1, +\infty)} = 0$ and $|\chi'| \leq 3$.

Set
$$
\psi = - \log(\log(e+\|z\|^2)).
$$
Then we have
$$
i \partial\overline{\partial} \psi = - i\frac{\partial\overline{\partial} \log(e+\|z\|^2)}{\log(e+\|z\|^2)} + i\frac{ \partial \log(e+\|z\|^2) \wedge \overline{\partial}\log(e+\|z\|^2) }{  (\log(e+\|z\|^2))^2}
$$
and
$$
i \partial \psi\wedge  \overline{\partial}\psi=  i\frac{ \partial \log(e+\|z\|^2) \wedge \overline{\partial}\log(e+\|z\|^2)}{(\log(e+\|z\|^2))^2}.
$$
Thus
\begin{eqnarray}
&& \frac{1}{2}  i \partial\overline{\partial} (\widetilde{\varphi}_j + \epsilon \psi) -  i \epsilon \partial \frac{\psi}{2} \wedge\epsilon \overline{\partial}\frac{\psi}{2} \nonumber \\
&=&  \frac{1}{2}  i \partial\overline{\partial} \varphi_j +   \frac{1}{2}  i \epsilon \partial\overline{\partial} \log(e+\|z\|^2) +  \frac{1}{2}i\epsilon \partial\overline{\partial} \psi- \frac{1}{4} i \epsilon ^2 \partial \psi \wedge \overline{\partial} \psi \nonumber \\
&\geq & \frac{1}{2} \epsilon i  \partial\overline{\partial} \log(e+\|z\|^2) -\frac{1}{2}i \epsilon \frac{  \partial\overline{\partial} \log(e+\|z\|^2)}{\log(e+\|z\|^2)} \nonumber \\
&&+ \frac{1}{2}\epsilon  i\frac{ \partial \log(e+\|z\|^2) \wedge \overline{\partial}\log(e+\|z\|^2) }{  (\log(e+\|z\|^2))^2} \nonumber \\
&& - \frac{1}{4}i  \epsilon^2  \frac{ \partial \log(e+\|z\|^2) \wedge \overline{\partial}\log(e+\|z\|^2)}{(\log(e+\|z\|^2))^2}   \nonumber\\
&\geq &\frac{1}{2}  \epsilon \left(1- \frac{1}{ \log(e+\|z\|^2)}\right)i \partial\overline{\partial} \log(e+\|z\|^2)   \nonumber \\
&&  + \frac{1}{2}   \epsilon  (1- \frac{1}{2} \epsilon  ) i  \frac{ \partial \log(e+\|z\|^2) \wedge \overline{\partial}\log(e+\|z\|^2)}{(\log(e+\|z\|^2))^2}  \nonumber \\
&\geq & 0 \nonumber
\end{eqnarray}

Let $f\in H( \widetilde{\varphi} )$, $ N \in \mathbb N$ so that $\frac{1}{N}<\varepsilon.$
Observe that
$$
\{\frac{N}{2\varepsilon}\le -\psi \le \frac{N}{\varepsilon} \}=
\{-\log 2 \le \log(-\varepsilon \psi)+\log \frac{1}{N}\le 0\}=A.
$$
 Then $f\cdot\chi\left( \log (-\epsilon \psi)+ \log \frac{1}{N}   \right)$ on $A$ extends as a smooth function by
setting it equal to $f$ when  $\log(-\varepsilon \psi)+\log \frac{1}{N}<-\log 2$ and $0$ when
$ \log(-\varepsilon \psi)+\log \frac{1}{N}>0.$ The extension is a smooth function but it is not holomorphic on $\mathbb{C}^n$, so we modify it first.  Put
$$
v_N : = f \cdot \overline{\partial}\chi\left( \log (-\epsilon \psi)+ \log \frac{1}{N}   \right).
$$
Apply Theorem \ref{th:Hormander1} especially for $\Omega = \mathbb{C}^n$ with $\varphi$ and $\psi$ replaced by $\widetilde{\varphi}_j +\frac{\epsilon \psi}{2}$ and $\frac{\epsilon \psi}{2}$ respectively, $r=\frac{1}{2}$ and $\Theta = i \epsilon \partial \psi \wedge\epsilon \overline{\partial}\psi$, we then obtain a solution $u_{j,N}$ of $\overline{\partial} u= v_N$ on $\mathbb{C}^n$ satisfying
\begin{eqnarray*}
&&  \int_{\mathbb{C}^n} |u_{j,N}|^2 e^{ \frac{\epsilon \psi}{2} -(\widetilde{\varphi} _j+ \frac{\epsilon \psi}{2}) }d\lambda \leq \frac{6}{(1-\frac{1}{2})^2}  \int_{\mathbb{C}^n} \left |f \overline{\partial}\chi\right|_\Theta ^2 e^{ \frac{\epsilon\psi}{2} -(\widetilde{\varphi}_j + \frac{\epsilon\psi}{2}) }d\lambda \nonumber \\
& \le & 24 \cdot 9 \int_{\frac{N}{2\epsilon} \leq -  \psi \leq \frac{N}{\epsilon}} |f|^2 \frac{1}{(\epsilon\psi )^2} \left|\epsilon \overline{\partial} \psi \right|_{\epsilon^2 i \partial \psi \wedge  \overline{\partial} \psi }^2 e^{-\widetilde{\varphi }_j}d\lambda\nonumber \\
& \le & \frac{C}{N^2} \int_{\frac{N}{2\epsilon} \leq  -  \psi \leq \frac{N}{\epsilon}}  |f|^2 e^{-\widetilde{\varphi }_j}d\lambda.\nonumber \\
\end{eqnarray*}

Put $K: = \{z: z\in \mathbb{C}^n, \ \ -\psi\le  \frac{N}{ \epsilon} \} $.
Let $q\in K$ and let $\Delta(q)$ be a polydisc centered at $q.$ Since $f\in H(\tilde{\varphi}),$ we have that $\int_{\Delta(q)}|f|^2 e^{-\tilde{\varphi}}d\lambda<\infty.$
By Theorem \ref{th:opennessconjecture} there exists a $j_q\ge 1$ and a number $0<r_q<1$ so that $\int_{\Delta(q)_{r_q}}|f|^2 e^{-\tilde{\varphi}_{j_q}}d\lambda<\infty.$

By compactness there are finitely many $q_i\in K, j_i\in \mathbb N, 1\le i \le m$ so that
$K\subset \cup_{i=1}^m \Delta_{r_i}(q_i)$ and $\int_{\Delta_{r_i}(q_i)} |f|^2 e^{-\tilde{\varphi}_{j_i}}d\lambda<\infty.$ Let $j_0=\max \{j_i\}.$ Then $\int_K |f|^2 e^{-\tilde{\varphi}_{j_0}}d\lambda<\infty.$

But $-\tilde{\varphi}_j$ monotonically decreases to $-\tilde{\varphi}$,  $|f|^2 e^{-\tilde{\varphi}_{j_0}}$ can be seen as the control function on $K$. So we have for all large enough $j\geq j_0$,
$$
 \int_K |f|^2 e^{-\tilde{\varphi_j}}d\lambda \longrightarrow   \int_K |f|^2 e^{-\tilde{\varphi}} d\lambda
$$
in view of the Lebesgue dominated convergence theorem.
Set
$$
F_{j,N} = f \cdot\chi\left( \log (-\epsilon \psi)+ \log \frac{1}{N}   \right) - u_{j,N}.
$$
We then have $F_{j,N} \in \mathcal{O}(\mathbb{C}^n)$ such that for each $j \geq j_0 \gg1$,
\begin{eqnarray*}
&& \|F_{j,N}\|_{H(\widetilde{\varphi}_j)} \leq \left\| f \cdot\chi\left( \log (-\psi)+ \log \frac{1}{N}   \right)\right\|_{H(\widetilde{\varphi}_j)} + \|u_{j,N}\|_{H(\widetilde{\varphi}_j)}\nonumber \\
&\leq & \left(\int _{-\psi < \frac{N}{\epsilon} } |f|^2 e^{-\widetilde{\varphi}_j}d\lambda\right)^{\frac{1}{2}} + \left( \int_{\mathbb{C}^n} |u_{j,N}|^2 e^{-\widetilde{\varphi}_j}d\lambda\right)^{\frac{1}{2}}\nonumber \\
& \leq & (1+ \frac{ C }{N})\|f\|_{H(\widetilde{\varphi})}  < +\infty.
\end{eqnarray*}
Thus $F_{j,N }\in \bigcup\limits_{j=1}^\infty H(\widetilde{\varphi}_j)$. \\
On the other hand, we have
\begin{eqnarray*} \label{eq:estimate12}
&& \int_{\mathbb{C}^n} | F_{j,N}- f|^2 e^{-\widetilde{\varphi}} d\lambda \nonumber \\
& = &  \int_{\mathbb{C}^n} \left| f \cdot\chi\left( \log (-\epsilon \psi)+ \log \frac{1}{N}   \right) - u_{j,N}- f\right|^2 e^{-\widetilde{\varphi}} d\lambda \nonumber \\
&\leq & 2 \int_{\mathbb{C}^n} \left| f \cdot\chi\left( \log (-\epsilon \psi)+ \log \frac{1}{N}   \right)- f\right|^2 e^{-\widetilde{\varphi}} d\lambda + 2 \int_{\mathbb{C}^n} | u_{j,N} |^2 e^{-\widetilde{\varphi}_j} d\lambda \nonumber \\
& \leq &  2 \int_{-\psi \geq \frac{N}{2\epsilon}} |f|^2 e^{-\widetilde{\varphi}}d \lambda
+  \frac{C}{N^2}  \|f\|^2_{H(\widetilde{\varphi})}     \longrightarrow 0    \  \  \  \ (N\longrightarrow+\infty) \nonumber \\
\end{eqnarray*}
Thus $\bigcup \limits _{j=1}^\infty H(\widetilde{\varphi}_j)$ is dense in $H(\widetilde{\varphi})$, which completes the proof.

\end{proof}

\section{ A counterexample for strong openness conjecture to $\mathbb{C}^n$ }

We show that Conjecture \ref{conj2} is however false.

Use the notation $|z|^{2|\alpha|}=|z_1|^{2\alpha_1}\cdots |z_n|^{2\alpha_n}$.
\begin{lemma}
Let $z^\alpha$ be a holomorphic mononomial. The following are equivalent in $\mathbb C^n.$\\
1. $\int_{\|z\|>1} \frac{|z|^{2|\alpha|}}{\|z\|^{N}}d\lambda <\infty;$\\
2. $N\ge 2|\alpha|+2n+1$.\\
\end{lemma}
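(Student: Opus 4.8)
The plan is to reduce the $n$-dimensional integral to a product of one-dimensional radial integrals, or more efficiently to a single radial integral in $\|z\|$, and then apply the standard convergence criterion for $\int_1^\infty t^{-s}\,dt$. First I would switch to polar coordinates in each complex variable, writing $z_k = \rho_k e^{i\theta_k}$ so that $d\lambda = \prod_{k=1}^n \rho_k\,d\rho_k\,d\theta_k$ (up to a harmless constant factor $(2\pi)^n$ which does not affect finiteness). After integrating out the angular variables, the question becomes whether
\[
\int_{\rho_1^2+\cdots+\rho_n^2>1}\ \frac{\rho_1^{2\alpha_1+1}\cdots\rho_n^{2\alpha_n+1}}{(\rho_1^2+\cdots+\rho_n^2)^{N/2}}\,d\rho_1\cdots d\rho_n
\]
is finite. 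Substituting $t_k=\rho_k^2$ turns this into a constant times $\int_{t_1+\cdots+t_n>1} t_1^{\alpha_1}\cdots t_n^{\alpha_n}(t_1+\cdots+t_n)^{-N/2}\,dt_1\cdots dt_n$, i.e. a purely real integral over the region outside the unit simplex-cone in the positive orthant.

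Next I would introduce "generalized polar coordinates" on the positive orthant: write $(t_1,\dots,t_n) = R\,(s_1,\dots,s_n)$ with $R = t_1+\cdots+t_n$ and $(s_1,\dots,s_n)$ ranging over the standard simplex $\{s_k\ge 0,\ \sum s_k =1\}$, with Jacobian $R^{n-1}\,dR\,d\sigma(s)$. The integrand becomes $R^{|\alpha|}\bigl(\prod s_k^{\alpha_k}\bigr)R^{-N/2}R^{n-1}$, and the $s$-integral $\int_{\Delta}\prod s_k^{\alpha_k}\,d\sigma(s)$ is a finite positive constant (a Dirichlet integral), so convergence of the whole thing is equivalent to convergence of
\[
\int_1^\infty R^{\,|\alpha| - N/2 + n - 1}\,dR,
\]
which is finite if and only if $|\alpha| - N/2 + n - 1 < -1$, i.e. $N > 2|\alpha| + 2n$, i.e. (since $N$ is an integer) $N \ge 2|\alpha| + 2n + 1$. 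This gives the equivalence of (1) and (2).

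The only genuine subtlety — and the step I would be most careful about — is justifying that the region $\|z\|>1$ can be replaced by the "conical" region $\sum t_k > 1$ without affecting finiteness, and that near the boundary pieces of the orthant (where some $t_k\to 0$) the integral causes no trouble: this is exactly where the hypothesis that $z^\alpha$ is a \emph{holomorphic} monomial (so all $\alpha_k\ge 0$) is used, guaranteeing $\prod t_k^{\alpha_k}$ is locally integrable near the coordinate hyperplanes and that the Dirichlet constant $\int_\Delta \prod s_k^{\alpha_k}\,d\sigma$ is finite. A clean way to sidestep the region-matching issue entirely is to note that $\|z\|^2 - 1$ and $\|z\|^2$ are comparable on $\{\|z\|\ge\sqrt2\}$, handle the compact annulus $1<\|z\|\le\sqrt 2$ trivially (the integrand is bounded there), and do the above computation on $\{\|z\|>\sqrt2\}$ where the radial substitution is transparent; I would present it this way to keep the estimates routine. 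An alternative, perhaps even shorter, route is to iterate the one-variable fact $\int_1^\infty r^{2\alpha_k+1}(r^2+c^2)^{-s}\,dr$ converges iff $2s > 2\alpha_k+2$, peeling off one variable at a time, but the simplex-coordinate argument makes the exponent bookkeeping most transparent.
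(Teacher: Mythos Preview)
Your argument is correct, but it takes a genuinely different route from the paper. The paper treats the two implications separately: for (1)$\Rightarrow$(2) it sums over the dyadic polyannuli $A_k=\{2^k\le |z_i|\le 2^{k+1}\ \forall i\}$, bounds the integrand below on each $A_k$, and observes that $\sum_k 2^{k(2|\alpha|+2n-N)}<\infty$ forces $2|\alpha|+2n-N\le -1$; for (2)$\Rightarrow$(1) it simply uses the pointwise bound $|z|^{2|\alpha|}\le \|z\|^{2|\alpha|}$ to reduce to $\int_{\|z\|\ge 1}\|z\|^{-(2n+1)}\,d\lambda<\infty$. Your simplex-coordinate computation is cleaner in that it handles both directions simultaneously and pins down the exact real threshold $N>2|\alpha|+2n$ (the integrality of $N$ and $|\alpha|$ is then only used to rewrite this as $N\ge 2|\alpha|+2n+1$, exactly as in the paper). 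The paper's argument, on the other hand, is more elementary and avoids any special coordinate system.

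One small remark: the ``region-matching'' subtlety you flag is not actually an issue. Since $\|z\|^2=\sum_k \rho_k^2=\sum_k t_k$, the set $\{\|z\|>1\}$ becomes \emph{exactly} $\{\sum t_k>1\}$ under your substitution, so there is no need for the auxiliary annulus $1<\|z\|\le\sqrt{2}$ or any comparability argument.
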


\begin{proof}
We prove that 1 implies 2.\\
For $k=1,2,\dots,$ let $A_k=\{z; 2^k\le |z_i|\le 2^{k+1}, \forall i=1,\dots,n.\}.$
On $A_k$ we have:\\
a) $|z|^{2|\alpha|}\ge (2^k)^{2|\alpha|}$;\\
b) $\|z\|^{N}\le (\sqrt{n}(2^{k+1}))^N$;\\
c) The volume $|A_k|\ge (\pi (2^k)^2)^{n}$.

Hence we get
\bea
\infty & > & \int_{\|z\|>1} \frac{|z|^{2|\alpha|}}{\|z\|^{N}}d\lambda\\
& > & \sum_k \int_{A_k} \frac{|z|^{2|\alpha|}}{\|z\|^{N}}d\lambda \\
& > & \sum_k \frac{(2^k)^{2|\alpha|}(\pi (2^k)^2)^{n}}{(\sqrt{n}(2^{k+1}))^N}\\
& = & c_N\sum_k 2^{k(2|\alpha|+2n-N)} \\
\eea
This implies that $2|\alpha|+2n-N\leq -1$. Hence 2 follows.

Next we assume 2. Observe that $|z|^{2|\alpha|}\le \|z\|^{2|\alpha|}.$
We then get

$\int_{\|z\|>1} \frac{|z|^{2|\alpha|}}{\|z\|^{N}} d\lambda \le \int_{\|z\|\ge 1}\frac{1}{ \|z\|^{2n+1}}d\lambda <\infty.$
\end{proof}

\begin{lemma}
Let $\psi$ be a plurisubharmonic function on $\mathbb C^n$. Suppose that
there exist positive number $R,C, N$ so that $\psi(z)=C+N\log \|z\|$ for all $z,\|z\|\ge R.$
Then if $\int_{\mathbb{C}^n} |f|^2e^{-\psi}d\lambda<\infty,$ then $f$ is a polynomial of degree at most $\frac{N-2n-1}{2}$.
\end{lemma}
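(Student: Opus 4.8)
The plan is to strip the hypothesis down to an integrability statement outside a large ball and then decompose $f$ into monomials, invoking the monomial Lemma above. Since $\psi(z)=C+N\log\|z\|$ on $\{\|z\|\ge R\}$, there $e^{-\psi(z)}=e^{-C}\|z\|^{-N}$, so
$$
\int_{\|z\|\ge R}\frac{|f(z)|^2}{\|z\|^{N}}\,d\lambda=e^{C}\int_{\|z\|\ge R}|f|^2e^{-\psi}\,d\lambda\le e^{C}\int_{\mathbb{C}^n}|f|^2e^{-\psi}\,d\lambda<\infty .
$$
Moreover the region $\{\min(1,R)\le\|z\|\le\max(1,R)\}$ is compact and $f$ is continuous there (and $\|z\|^{-N}$ is bounded there, as $R>0$), so $\int_{\|z\|\ge R}|f|^2\|z\|^{-N}\,d\lambda<\infty$ is equivalent to $\int_{\|z\|\ge 1}|f|^2\|z\|^{-N}\,d\lambda<\infty$; this is the form in which the monomial Lemma applies, and the behaviour of $\psi$ on the ball $\{\|z\|<R\}$ plays no role.

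Next I would write the Taylor expansion $f=\sum_{\alpha}c_\alpha z^\alpha$, which converges absolutely and uniformly on compact subsets of $\mathbb{C}^n$. The key elementary fact is that distinct monomials are orthogonal with respect to the radially weighted measure $\|z\|^{-N}d\lambda$ on the complement of any ball centered at the origin: passing to polar coordinates $z=r\omega$, $r=\|z\|$, $\omega\in S^{2n-1}$, with $d\lambda=r^{2n-1}\,dr\,d\sigma(\omega)$, the weight $\|z\|^{-N}=r^{-N}$ is radial, and $\int_{S^{2n-1}}z^\alpha\overline{z^\beta}\,d\sigma=0$ whenever $\alpha\ne\beta$ (integrate each angular variable separately). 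Combining this with the nonnegativity of all the integrands, so that Tonelli's theorem justifies integrating the expansion of $|f|^2=\sum_{\alpha,\beta}c_\alpha\overline{c_\beta}z^\alpha\overline{z^\beta}$ term by term, I get
$$
\int_{\|z\|\ge 1}\frac{|f|^2}{\|z\|^{N}}\,d\lambda=\sum_{\alpha}|c_\alpha|^2\int_{\|z\|\ge 1}\frac{|z^\alpha|^2}{\|z\|^{N}}\,d\lambda .
$$

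Since the left-hand side is finite, every summand on the right is finite; in particular, whenever $c_\alpha\ne 0$, the monomial Lemma forces $N\ge 2|\alpha|+2n+1$, i.e.\ $|\alpha|\le\frac{N-2n-1}{2}$. Hence $c_\alpha=0$ for every $\alpha$ with $|\alpha|>\frac{N-2n-1}{2}$, which is exactly the assertion that $f$ is a polynomial of degree at most $\frac{N-2n-1}{2}$. The only step needing genuine care is the term-by-term integration over the unbounded set $\{\|z\|\ge 1\}$, but since every term $|c_\alpha\overline{c_\beta}\,z^\alpha\overline{z^\beta}|\,\|z\|^{-N}$ is nonnegative this is immediate from Tonelli once orthogonality kills the off-diagonal contributions; the rest is the direct computation already packaged in the monomial Lemma.
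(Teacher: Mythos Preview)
Your proof is correct and follows the same approach as the paper: expand $f$ in its Taylor series, use the orthogonality of monomials with respect to the radial weight $\|z\|^{-N}$ to obtain a Parseval-type identity on $\{\|z\|\ge 1\}$, and then invoke the preceding monomial Lemma for each nonzero coefficient. The paper's version is terser---it simply writes $\int_{\|z\|\ge R}\sum_\alpha|a_\alpha|^2|z|^{2\alpha}e^{-C}\|z\|^{-N}\,d\lambda<\infty$ without spelling out the orthogonality or the passage from $R$ to $1$---while you supply those details.
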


\begin{proof}
We write $f$ in power series, $f=\sum_\alpha a_\alpha z^\alpha$. We have that
$$
\int_{\|z\| \ge R} \sum_{\alpha}|a_\alpha|^2 |z|^{2\alpha} \frac{e^{-C}}{\|z\|^{N}}d\lambda \le \infty.
$$
In particular each individual term in the integral must be finite. By the above Lemma the only non zero terms can be those for which $N\ge 2|\alpha|+2n+1,$ i.e. $|\alpha|\le \frac{N-2n-1}{2}.$
\end{proof}

For $k=1,2,\dots$, let $N_k$ be an increasing sequence so that $N_k\geq 2n+1+2k.$
We define inductively continuous plurisubharmonic functions, $\varphi_1\le \varphi_2\le \cdots$ and $\varphi_j\rightarrow \varphi.$

\bea
\varphi_1 & = & \max\{1, \ln \|z\|\}\\
\varphi_2 & = & \varphi_1, \|z\|\le 2\\
\varphi_2 & = & C_2+N_2 \log \|z\|, \|z\|\ge 2\\
\cdots & & \\
\varphi_{k+1} & = & \varphi_k, \|z\|\le k+1\\
\varphi_{k+1} & = & C_{k+1}+N_{k+1} \log \|z\|, \|z\|\ge k+1\\
\eea

We see that
\begin{lemma}
The limit $\varphi=\lim\limits_k (C_{k}+N_{k}\log \|z\|), k\leq \|z\|\le k+1, k=2,\dots$, $\varphi=\max \{1,\log \|z\|\}$ when
$\|z\|\le 2.$
\end{lemma}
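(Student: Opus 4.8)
The plan is to unwind the inductive definition of $\varphi_1\le\varphi_2\le\cdots$, observe that the sequence stabilizes on every ball (so that $\varphi=\lim_k\varphi_k$ is finite and plurisubharmonic), and then read off the claimed piecewise formula.

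First I would prove, by induction on $k$, that each $\varphi_k$ is a well-defined continuous plurisubharmonic function on $\mathbb{C}^n$ which equals $C_k+N_k\log\|z\|$ on $\{\|z\|\ge k\}$ for $k\ge 2$ (and that $\varphi_1\equiv 1$ on $\{\|z\|\le e\}$, so in particular it is radial across the sphere $\{\|z\|=2\}$). The constant $C_{k+1}$ is forced by demanding continuity across $\{\|z\|=k+1\}$: since $\varphi_k$ is radial there, this gives $C_2+N_2\log 2=1$ and $C_{k+1}=C_k+(N_k-N_{k+1})\log(k+1)$ for $k\ge 2$. Because $N_{k+1}>N_k$, a one-line computation then shows $C_{k+1}+N_{k+1}\log\|z\|\ge C_k+N_k\log\|z\|$ on $\{\|z\|\ge k+1\}$ and the reverse inequality on $\{k\le\|z\|\le k+1\}$, with equality on the gluing sphere. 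Hence on the open set $\{\|z\|>k\}$ we may write $\varphi_{k+1}=\max\{\varphi_k,\,C_{k+1}+N_{k+1}\log\|z\|\}$, which is plurisubharmonic, while on $\{\|z\|<k+1\}$ we have $\varphi_{k+1}=\varphi_k$; the two expressions agree on the overlap, so $\varphi_{k+1}$ is plurisubharmonic on all of $\mathbb{C}^n$. The same comparison gives $\varphi_{k+1}\ge\varphi_k$, so the sequence is increasing.

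Next I would record the stabilization fact: since $\varphi_{k+1}=\varphi_k$ on $\{\|z\|\le k+1\}$, a trivial induction shows $\varphi_j=\varphi_{k+1}$ on the ball $\{\|z\|\le k+1\}$ for every $j\ge k+1$; therefore $\varphi=\lim_k\varphi_k$ is finite everywhere, coincides with the plurisubharmonic function $\varphi_{k+1}$ on $\{\|z\|<k+1\}$, and is thus plurisubharmonic on $\mathbb{C}^n$. Finally the formula drops out: for $\|z\|\le 2$ one has $\varphi=\varphi_2=\varphi_1=\max\{1,\log\|z\|\}$, and for $k\le\|z\|\le k+1$ with $k\ge 2$ one has $\varphi=\varphi_{k+1}=\varphi_k=C_k+N_k\log\|z\|$, using the inductive description of $\varphi_k$ on $\{\|z\|\ge k\}$.

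There is essentially no obstacle here; the only step needing a little care is the plurisubharmonicity of the glued functions $\varphi_{k+1}$, which is exactly why I isolate the choice of $C_{k+1}$ and the two-sided comparison of $C_k+N_k\log\|z\|$ with $C_{k+1}+N_{k+1}\log\|z\|$ as the technical core; everything else is bookkeeping with the recursion.
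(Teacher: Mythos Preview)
Your argument is correct and is exactly the unwinding of the inductive construction that the paper has in mind; in fact the paper offers no proof of this lemma at all, treating it as an immediate observation (``We see that\ldots''). You have supplied more detail than the authors---in particular the verification that the $C_{k+1}$ are determined by continuity and that the glued $\varphi_{k+1}$ are plurisubharmonic via the max formula---but the route is the same and there is nothing to correct.
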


Let $A_\ell:= \int_{\|z\|\le \ell} e^{-\varphi_1}|z_1|^{2\ell}d\lambda$ and set
$B_\ell:= \int_{\|z\|\ge \ell} |z_1|^{2\ell}e^{-\varphi_\ell}d\lambda.$
Pick $\epsilon_\ell$ so small that $\epsilon_\ell (A_\ell+B_\ell)<1/2^{\ell}.$

\begin{lemma}
The function $f=\sum_{k\ge 1} \epsilon_k z_1^k$ belongs to $H(\varphi)$ but not to any
$H(\varphi_k).$
\end{lemma}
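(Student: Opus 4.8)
The plan is to prove the two halves of the statement separately: that $f\in H(\varphi)$, and that $f\notin H(\varphi_k)$ for every $k\ge1$. The second half will drop out of the integrability lemmas proved above, so the real content is $f\in H(\varphi)$; the engine there is the monotonicity $\varphi_1\le\varphi_2\le\cdots\le\varphi$ together with the fact that each $\varphi_k$ has the explicit form $C_k+N_k\log\|z\|$ outside a compact set.

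To show $f\in H(\varphi)$, I would bound the norm of $f$ by applying the $L^2(\varphi)$-triangle inequality termwise,
\[
\|f\|_{H(\varphi)}\le\sum_{k\ge1}\epsilon_k\,\|z_1^k\|_{H(\varphi)},
\]
and then estimate each monomial norm. The key step is to split the integral $\int_{\mathbb C^n}|z_1|^{2k}e^{-\varphi}d\lambda$ at the sphere $\|z\|=k$. On $\{\|z\|\le k\}$ use $\varphi\ge\varphi_1$, so that piece is at most $\int_{\|z\|\le k}|z_1|^{2k}e^{-\varphi_1}d\lambda=A_k$, finite because the region is bounded and $e^{-\varphi_1}\le e^{-1}$. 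On $\{\|z\|\ge k\}$ use $\varphi\ge\varphi_k$ and the identity $\varphi_k=C_k+N_k\log\|z\|$ valid there, so that piece is at most $\int_{\|z\|\ge k}|z_1|^{2k}e^{-\varphi_k}d\lambda=B_k$, which is finite by the first Lemma of this section since, for $\alpha=(k,0,\dots,0)$, one has $N_k\ge 2n+1+2k=2|\alpha|+2n+1$. Hence $\|z_1^k\|_{H(\varphi)}\le\sqrt{A_k+B_k}$, and as $\epsilon_k$ was chosen with $\epsilon_k(A_k+B_k)<2^{-k}$, the series $\sum_k\epsilon_k\|z_1^k\|_{H(\varphi)}$ converges. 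The same bound shows $f$ is genuinely entire: by the sub-mean value inequality, $\sup_{\|z\|\le M}|z_1^k|\le C_M\|z_1^k\|_{H(\varphi)}$, so $\sum_k\epsilon_k z_1^k$ converges locally uniformly to an entire function, necessarily $f$, with the partial sums also converging to $f$ in $L^2(\varphi)$; therefore $f\in H(\varphi)$.

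For $f\notin H(\varphi_k)$, note that $\varphi_k$ is plurisubharmonic on $\mathbb C^n$ and equals $C_k+N_k\log\|z\|$ for all $\|z\|\ge k$, so it satisfies the hypotheses of the second Lemma of this section (with $R=k$, $C=C_k$, $N=N_k$; for $k=1$ take $R=e$, $C=0$, $N=1$). Thus, if $\int_{\mathbb C^n}|f|^2e^{-\varphi_k}d\lambda$ were finite, that Lemma would force $f$ to be a polynomial of degree at most $(N_k-2n-1)/2$. But $f=\sum_{\ell\ge1}\epsilon_\ell z_1^\ell$ with every $\epsilon_\ell>0$, so $f$ has infinitely many nonzero Taylor coefficients and is not a polynomial, a contradiction. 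Hence $f\notin H(\varphi_k)$ for every $k\ge1$, and in particular $f$ refutes Conjecture \ref{conj2}.

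I expect the main obstacle to be the monomial estimate for $\|z_1^k\|_{H(\varphi)}$, specifically the realization that the splitting radius must be taken to be $\|z\|=k$: inside that ball $\varphi$ is bounded below only by the harmless $\varphi_1$, while outside it the sharper bound $\varphi\ge\varphi_k$, with growth $N_k\log\|z\|$ and the choice $N_k\ge 2n+1+2k$, is exactly strong enough (via the first Lemma) to make $|z_1|^{2k}e^{-\varphi_k}$ integrable at infinity. Once this bound on each monomial is available, the termwise triangle inequality, the convergence of the series, and the entireness of $f$ are routine.
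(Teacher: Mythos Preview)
Your proof is correct and follows essentially the same route as the paper: the key step in both is to split $\int_{\mathbb C^n}|z_1|^{2k}e^{-\varphi}\,d\lambda$ at the sphere $\|z\|=k$, use $\varphi\ge\varphi_1$ on the inside to get $A_k$, and $\varphi\ge\varphi_k=C_k+N_k\log\|z\|$ on the outside to get $B_k$, then invoke the choice $\epsilon_k(A_k+B_k)<2^{-k}$.

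The only mild difference is that the paper (implicitly using the rotational invariance of $e^{-\varphi}d\lambda$ to orthogonalise the monomials $z_1^k$) reduces to $\sum_k\epsilon_k\int|z_1|^{2k}e^{-\varphi}d\lambda<\infty$, which is immediate from $\epsilon_k(A_k+B_k)<2^{-k}$; you instead use the triangle inequality, which leads to $\sum_k\epsilon_k\sqrt{A_k+B_k}$. That sum still converges, since $\epsilon_k\sqrt{A_k+B_k}=\sqrt{\epsilon_k}\,\sqrt{\epsilon_k(A_k+B_k)}<\sqrt{\epsilon_k}\,2^{-k/2}$ and the $\epsilon_k$ are automatically bounded (indeed $A_k\to\infty$, so $\epsilon_k<2^{-k}/A_k\to0$); you may want to say this one line explicitly. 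Your treatment is otherwise more complete than the paper's, since you also verify that $f$ is entire and spell out, via the second lemma, why $f\notin H(\varphi_k)$ for every $k$.
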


\begin{proof}
We only need to show that $\int_{\mathbb C^n} |f|^2 e^{-\varphi} d\lambda<\infty.$
So we need to show that
$\sum_k \int_{\mathbb C^n} \epsilon_k |z_1|^{2k}e^{-\varphi} d\lambda<\infty.$
We get

\bea
\int_{\mathbb C^n} \epsilon_k |z_1|^{2k}e^{-\varphi} d\lambda
& = & \int_{\|z\|\le k} \epsilon_k |z_1|^{2k}e^{-\varphi} d\lambda+\int_{\|z\|\geq k} \epsilon_k |z_1|^{2k}e^{-\varphi} d\lambda\\
& \leq & \epsilon_k(A_k+B_k)\\
& \leq & 1/2^k.\\
\eea
\end{proof}

\textbf{Acknowledgements}  Part of the work was done during the visit of both authors at Tsinghua Sanya International Mathematics Forum in Sanya.  They would like to thank Tsinghua Sanya International Mathematics Forum  for providing a stimulating environment. The second author also thanks Professor Bo-Yong Chen for valuable comments. The first author was supported in part by the Norwegian Research Council grant number 240569 and NSF grant DMS1006294

\end{document}